\newcommand{\nc}{\newcommand}
\nc{\md}{\operatorname{-}}
\nc{\CC}{{\mathbb{C}}}
\nc{\DD}{{\mathbb{D}}}
\nc{\LL}{{\mathbb{L}}}
\nc{\RR}{{\mathbb{R}}}
\renewcommand{\P}{{\mathbb{P}}}
\nc{\OO}{{\mathbb{O}}}
\nc{\QQ}{{\mathbb{Q}}}
\nc{\ZZ}{{\mathbb{Z}}}
\nc{\Z}{{\mathbb{Z}}}
\nc{\cA}{{\mathcal{A}}}
\nc{\cB}{{\mathcal{B}}}
\nc{\cC}{{\mathcal{C}}}
\nc{\cD}{{\mathcal{D}}}
\nc{\cE}{{\mathcal{E}}}
\nc{\cF}{{\mathcal{F}}}
\nc{\cG}{{\mathcal{G}}}
\nc{\cH}{{\mathcal{H}}}
\nc{\cI}{{\mathcal{I}}}
\nc{\cJ}{{\mathcal{J}}}
\nc{\cK}{{\mathcal{K}}}
\nc{\cL}{{\mathcal{L}}}
\nc{\cM}{{\mathcal{M}}}
\nc{\cN}{{\mathcal{N}}}
\nc{\cO}{{\mathcal{O}}}
\nc{\cP}{{\mathcal{P}}}
\nc{\cQ}{{\mathcal{Q}}}
\nc{\cR}{{\mathcal{R}}}
\nc{\cS}{{\mathcal{S}}}
\nc{\cT}{{\mathcal{T}}}
\nc{\cU}{{\mathcal{U}}}
\nc{\cV}{{\mathcal{V}}}
\nc{\cW}{{\mathcal{W}}}
\nc{\cX}{{\mathcal{X}}}
\nc{\cY}{{\mathcal{Y}}}
\nc{\cZ}{{\mathcal{Z}}}
\nc{\rc}{{\mathrm{c}}}
\nc{\rd}{{\mathrm{d}}}
\nc{\rf}{{\mathrm{f}}}
\nc{\rh}{{\mathrm{h}}}
\nc{\rrm}{{\mathrm{m}}}
\nc{\rs}{{\mathrm{s}}}
\nc{\rch}{{\mathrm{ch}}}
\nc{\rtd}{{\mathrm{td}}}
\nc{\rA}{{\mathrm{A}}}
\nc{\rB}{{\mathrm{B}}}
\nc{\rC}{{\mathrm{C}}}
\nc{\rD}{{\mathrm{D}}}
\nc{\rE}{{\mathrm{E}}}
\nc{\rF}{{\mathrm{F}}}
\nc{\rG}{{\mathrm{G}}}
\nc{\rH}{{\mathrm{H}}}
\nc{\rI}{{\mathrm{I}}}
\nc{\rJ}{{\mathrm{J}}}
\nc{\rK}{{\mathrm{K}}}
\nc{\rL}{{\mathrm{L}}}
\nc{\rM}{{\mathrm{M}}}
\nc{\rN}{{\mathrm{N}}}
\nc{\rO}{{\mathrm{O}}}
\nc{\rP}{{\mathrm{P}}}
\nc{\rQ}{{\mathrm{Q}}}
\nc{\rR}{{\mathrm{R}}}
\nc{\rS}{{\mathrm{S}}}
\nc{\rT}{{\mathrm{T}}}
\nc{\rU}{{\mathrm{U}}}
\nc{\rV}{{\mathrm{V}}}
\nc{\rW}{{\mathrm{W}}}
\nc{\rX}{{\mathrm{X}}}
\nc{\rY}{{\mathrm{Y}}}
\nc{\rZ}{{\mathrm{Z}}}
\nc{\bA}{{\mathbf{A}}}
\nc{\bB}{{\mathbf{B}}}
\nc{\bC}{{\mathbf{C}}}
\nc{\bD}{{\mathbf{D}}}
\nc{\bE}{{\mathbf{E}}}
\nc{\bF}{{\mathbf{F}}}
\nc{\bG}{{\mathbf{G}}}
\nc{\bH}{{\mathbf{H}}}
\nc{\bI}{{\mathbf{I}}}
\nc{\bJ}{{\mathbf{J}}}
\nc{\bK}{{\mathbf{K}}}
\nc{\bL}{{\mathbf{L}}}
\nc{\bM}{{\mathbf{M}}}
\nc{\bN}{{\mathbf{N}}}
\nc{\bO}{{\mathbf{O}}}
\nc{\bP}{{\mathbf{P}}}
\nc{\bQ}{{\mathbf{Q}}}
\nc{\bR}{{\mathbf{R}}}
\nc{\bS}{{\mathbf{S}}}
\nc{\bT}{{\mathbf{T}}}
\nc{\bU}{{\mathbf{U}}}
\nc{\bV}{{\mathbf{V}}}
\nc{\bW}{{\mathbf{W}}}
\nc{\bX}{{\mathbf{X}}}
\nc{\bY}{{\mathbf{Y}}}
\nc{\bZ}{{\mathbf{Z}}}
\nc{\ba}{{\mathbf{a}}}
\nc{\bb}{{\mathbf{b}}}
\nc{\bc}{{\mathbf{c}}}
\nc{\bd}{{\mathbf{d}}}
\nc{\be}{{\mathbf{e}}}
\nc{\bg}{{\mathbf{g}}}
\nc{\bh}{{\mathbf{h}}}
\nc{\bi}{{\mathbf{i}}}
\nc{\bj}{{\mathbf{j}}}
\nc{\bk}{{\mathbf{k}}}
\nc{\bl}{{\mathbf{l}}}
\nc{\bm}{{\mathbf{m}}}
\nc{\bn}{{\mathbf{n}}}
\nc{\bo}{{\mathbf{o}}}
\nc{\bp}{{\mathbf{p}}}
\nc{\bq}{{\mathbf{q}}}
\nc{\br}{{\mathbf{r}}}
\nc{\bs}{{\mathbf{s}}}
\nc{\bt}{{\mathbf{t}}}
\nc{\bu}{{\mathbf{u}}}
\nc{\bv}{{\mathbf{v}}}
\nc{\bw}{{\mathbf{w}}}
\nc{\bx}{{\mathbf{x}}}
\nc{\by}{{\mathbf{y}}}
\nc{\bz}{{\mathbf{z}}}
\nc{\fA}{{\mathfrak{A}}}
\nc{\fB}{{\mathfrak{B}}}
\nc{\fC}{{\mathfrak{C}}}
\nc{\fD}{{\mathfrak{D}}}
\nc{\fE}{{\mathfrak{E}}}
\nc{\fF}{{\mathfrak{F}}}
\nc{\fG}{{\mathfrak{G}}}
\nc{\fH}{{\mathfrak{H}}}
\nc{\fI}{{\mathfrak{I}}}
\nc{\fJ}{{\mathfrak{J}}}
\nc{\fK}{{\mathfrak{K}}}
\nc{\fL}{{\mathfrak{L}}}
\nc{\fM}{{\mathfrak{M}}}
\nc{\fN}{{\mathfrak{N}}}
\nc{\fO}{{\mathfrak{O}}}
\nc{\fP}{{\mathfrak{P}}}
\nc{\fQ}{{\mathfrak{Q}}}
\nc{\fR}{{\mathfrak{R}}}
\nc{\fS}{{\mathfrak{S}}}
\nc{\fT}{{\mathfrak{T}}}
\nc{\fU}{{\mathfrak{U}}}
\nc{\fV}{{\mathfrak{V}}}
\nc{\fW}{{\mathfrak{W}}}
\nc{\fX}{{\mathfrak{X}}}
\nc{\fY}{{\mathfrak{Y}}}
\nc{\fZ}{{\mathfrak{Z}}}
\nc{\fa}{{\mathfrak{a}}}
\nc{\fb}{{\mathfrak{b}}}
\nc{\fc}{{\mathfrak{c}}}
\nc{\fd}{{\mathfrak{d}}}
\nc{\fe}{{\mathfrak{e}}}
\nc{\ff}{{\mathfrak{f}}}
\nc{\fg}{{\mathfrak{g}}}
\nc{\fh}{{\mathfrak{h}}}
\nc{\fj}{{\mathfrak{j}}}
\nc{\fk}{{\mathfrak{k}}}
\nc{\fl}{{\mathfrak{l}}}
\nc{\fm}{{\mathfrak{m}}}
\nc{\fn}{{\mathfrak{n}}}
\nc{\fo}{{\mathfrak{o}}}
\nc{\fp}{{\mathfrak{p}}}
\nc{\fq}{{\mathfrak{q}}}
\nc{\fr}{{\mathfrak{r}}}
\nc{\fs}{{\mathfrak{s}}}
\nc{\ft}{{\mathfrak{t}}}
\nc{\fu}{{\mathfrak{u}}}
\nc{\fv}{{\mathfrak{v}}}
\nc{\fw}{{\mathfrak{w}}}
\nc{\fx}{{\mathfrak{x}}}
\nc{\fy}{{\mathfrak{y}}}
\nc{\fz}{{\mathfrak{z}}}
\nc{\sA}{{\mathsf{A}}}
\nc{\sB}{{\mathsf{B}}}
\nc{\sC}{{\mathsf{C}}}
\nc{\sD}{{\mathsf{D}}}
\nc{\sE}{{\mathsf{E}}}
\nc{\sF}{{\mathsf{F}}}
\nc{\sG}{{\mathsf{G}}}
\nc{\sH}{{\mathsf{H}}}
\nc{\sI}{{\mathsf{I}}}
\nc{\sJ}{{\mathsf{J}}}
\nc{\sK}{{\mathsf{K}}}
\nc{\sL}{{\mathsf{L}}}
\nc{\sM}{{\mathsf{M}}}
\nc{\sN}{{\mathsf{N}}}
\nc{\sO}{{\mathsf{O}}}
\nc{\sP}{{\mathsf{P}}}
\nc{\sQ}{{\mathsf{Q}}}
\nc{\sR}{{\mathsf{R}}}
\nc{\sS}{{\mathsf{S}}}
\nc{\sT}{{\mathsf{T}}}
\nc{\sU}{{\mathsf{U}}}
\nc{\sV}{{\mathsf{V}}}
\nc{\sW}{{\mathsf{W}}}
\nc{\sX}{{\mathsf{X}}}
\nc{\sY}{{\mathsf{Y}}}
\nc{\sZ}{{\mathsf{Z}}}
\nc{\sa}{{\mathsf{a}}}
\nc{\sd}{{\mathsf{d}}}
\nc{\se}{{\mathsf{e}}}
\nc{\sg}{{\mathsf{g}}}
\nc{\sh}{{\mathsf{h}}}
\nc{\si}{{\mathsf{i}}}
\nc{\sj}{{\mathsf{j}}}
\nc{\sk}{{\mathsf{k}}}
\nc{\sm}{{\mathsf{m}}}
\nc{\sn}{{\mathsf{n}}}
\nc{\so}{{\mathsf{o}}}
\nc{\sq}{{\mathsf{q}}}
\nc{\sr}{{\mathsf{r}}}
\nc{\st}{{\mathsf{t}}}
\nc{\su}{{\mathsf{u}}}
\nc{\sv}{{\mathsf{v}}}
\nc{\sw}{{\mathsf{w}}}
\nc{\sx}{{\mathsf{x}}}
\nc{\sy}{{\mathsf{y}}}
\nc{\sz}{{\mathsf{z}}}
\nc{\oA}{{\overline{A}}}
\nc{\oB}{{\overline{B}}}
\nc{\oC}{{\overline{C}}}
\nc{\oD}{{\overline{D}}}
\nc{\oE}{{\overline{E}}}
\nc{\oF}{{\overline{F}}}
\nc{\oG}{{\overline{G}}}
\nc{\oH}{{\overline{H}}}
\nc{\oI}{{\overline{I}}}
\nc{\oJ}{{\overline{J}}}
\nc{\oK}{{\overline{K}}}
\nc{\oL}{{\overline{L}}}
\nc{\oM}{{\overline{M}}}
\nc{\oN}{{\overline{N}}}
\nc{\oO}{{\overline{O}}}
\nc{\oP}{{\overline{P}}}
\nc{\oQ}{{\overline{Q}}}
\nc{\oR}{{\overline{R}}}
\nc{\oS}{{\overline{S}}}
\nc{\oT}{{\overline{T}}}
\nc{\oU}{{\overline{U}}}
\nc{\oV}{{\overline{V}}}
\nc{\oW}{{\overline{W}}}
\nc{\oX}{{\overline{X}}}
\nc{\oY}{{\overline{Y}}}
\nc{\oZ}{{\overline{Z}}}
\nc{\oa}{{\overline{a}}}
\nc{\ob}{{\overline{b}}}
\nc{\oc}{{\overline{c}}}
\nc{\od}{{\overline{d}}}
\nc{\of}{{\overline{f}}}
\nc{\og}{{\overline{g}}}
\nc{\oh}{{\overline{h}}}
\nc{\oi}{{\overline{i}}}
\nc{\oj}{{\overline{j}}}
\nc{\ok}{{\overline{k}}}
\nc{\ol}{{\overline{l}}}
\nc{\om}{{\overline{m}}}
\nc{\on}{{\overline{n}}}
\nc{\oo}{{\overline{o}}}
\nc{\op}{{\overline{p}}}
\nc{\oq}{{\overline{q}}}
\nc{\os}{{\overline{s}}}
\nc{\ot}{{\overline{t}}}
\nc{\ou}{{\overline{u}}}
\nc{\ov}{{\overline{v}}}
\nc{\ow}{{\overline{w}}}
\nc{\ox}{{\overline{x}}}
\nc{\oy}{{\overline{y}}}
\nc{\oz}{{\overline{z}}}
\nc{\tA}{{\tilde{A}}}
\nc{\tB}{{\tilde{B}}}
\nc{\tC}{{\tilde{C}}}
\nc{\tD}{{\tilde{D}}}
\nc{\tE}{{\tilde{E}}}
\nc{\tF}{{\tilde{F}}}
\nc{\tG}{{\tilde{G}}}
\nc{\tH}{{\tilde{H}}}
\nc{\tI}{{\tilde{I}}}
\nc{\tJ}{{\tilde{J}}}
\nc{\tK}{{\tilde{K}}}
\nc{\tL}{{\tilde{L}}}
\nc{\tM}{{\tilde{M}}}
\nc{\tN}{{\tilde{N}}}
\nc{\tO}{{\tilde{O}}}
\nc{\tP}{{\tilde{P}}}
\nc{\tQ}{{\tilde{Q}}}
\nc{\tR}{{\tilde{R}}}
\nc{\tS}{{\tilde{S}}}
\nc{\tT}{{\tilde{T}}}
\nc{\tU}{{\tilde{U}}}
\nc{\tV}{{\tilde{V}}}
\nc{\tW}{{\tilde{W}}}
\nc{\tX}{{\tilde{X}}}
\nc{\tY}{{\tilde{Y}}}
\nc{\tZ}{{\tilde{Z}}}
\nc{\tfD}{{\tilde{\fD}}}
\nc{\tcA}{{\tilde{\cA}}}
\nc{\tcB}{{\tilde{\cB}}}
\nc{\tcC}{{\tilde{\cC}}}
\nc{\tcD}{{\tilde{\cD}}}
\nc{\tcE}{{\tilde{\cE}}}
\nc{\tcF}{{\tilde{\cF}}}
\nc{\tcM}{{\tilde{\cM}}}
\nc{\tcP}{{\tilde{\cP}}}
\nc{\tcT}{{\tilde{\cT}}}
\nc{\ta}{{\tilde{a}}}
\nc{\tb}{{\tilde{b}}}
\nc{\tc}{{\tilde{c}}}
\nc{\td}{{\tilde{d}}}
\nc{\te}{{\tilde{e}}}
\nc{\tf}{{\tilde{f}}}
\nc{\tg}{{\tilde{g}}}
\nc{\ti}{{\tilde{\imath}}}
\nc{\tj}{{\tilde{j}}}
\nc{\tk}{{\tilde{k}}}
\nc{\tl}{{\tilde{l}}}
\nc{\tm}{{\tilde{m}}}
\nc{\tn}{{\tilde{n}}}
\nc{\tp}{{\tilde{p}}}
\nc{\tq}{{\tilde{q}}}
\nc{\tr}{{\tilde{r}}}
\nc{\ts}{{\tilde{s}}}
\nc{\tu}{{\tilde{u}}}
\nc{\tv}{{\tilde{v}}}
\nc{\tw}{{\tilde{w}}}
\nc{\tx}{{\tilde{x}}}
\nc{\ty}{{\tilde{y}}}
\nc{\tz}{{\tilde{z}}}
\nc{\hA}{{\hat{A}}}
\nc{\hB}{{\hat{B}}}
\nc{\hC}{{\hat{C}}}
\nc{\hD}{{\hat{D}}}
\nc{\hE}{{\hat{E}}}
\nc{\hF}{{\hat{F}}}
\nc{\hG}{{\hat{G}}}
\nc{\hH}{{\hat{H}}}
\nc{\hI}{{\hat{I}}}
\nc{\hJ}{{\hat{J}}}
\nc{\hK}{{\hat{K}}}
\nc{\hL}{{\hat{L}}}
\nc{\hM}{{\hat{M}}}
\nc{\hN}{{\hat{N}}}
\nc{\hO}{{\hat{O}}}
\nc{\hP}{{\hat{P}}}
\nc{\hQ}{{\hat{Q}}}
\nc{\hR}{{\hat{R}}}
\nc{\hS}{{\hat{S}}}
\nc{\hT}{{\hat{T}}}
\nc{\hU}{{\hat{U}}}
\nc{\hV}{{\hat{V}}}
\nc{\hW}{{\hat{W}}}
\nc{\hX}{{\widehat{X}}}
\nc{\hY}{{\hat{Y}}}
\nc{\hZ}{{\hat{Z}}}
\nc{\ha}{{\hat{a}}}
\nc{\hb}{{\hat{b}}}
\nc{\hc}{{\hat{c}}}
\nc{\hd}{{\hat{d}}}
\nc{\he}{{\hat{e}}}
\nc{\hg}{{\hat{g}}}
\nc{\hh}{{\hat{h}}}
\nc{\hi}{{\hat{i}}}
\nc{\hj}{{\hat{j}}}
\nc{\hk}{{\hat{k}}}
\nc{\hl}{{\hat{l}}}
\nc{\hm}{{\hat{m}}}
\nc{\hn}{{\hat{n}}}
\nc{\ho}{{\hat{o}}}
\nc{\hp}{{\hat{p}}}
\nc{\hq}{{\hat{q}}}
\nc{\hr}{{\hat{r}}}
\nc{\hs}{{\hat{s}}}
\nc{\hu}{{\hat{u}}}
\nc{\hv}{{\hat{v}}}
\nc{\hw}{{\hat{w}}}
\nc{\hx}{{\hat{x}}}
\nc{\hy}{{\hat{y}}}
\nc{\hz}{{\hat{z}}}
\nc{\hcC}{{\widehat{\cC}}}
\nc{\hcT}{{\widehat{\cT}}}
\nc{\eps}{\upepsilon}
\nc{\lan}{\big\langle}
\nc{\ran}{\big\rangle}
\nc{\kk}{{\Bbbk}}
\nc{\io}{\upiota}
\nc{\Kr}{\mathsf{Kr}}
\nc{\cKr}{\mathcal{K}\!\mathit{r}}
\nc{\Dm}{\bD^{-}}
\nc{\Db}{\bD^{\mathrm{b}}}
\nc{\Dbc}{\bD^{\mathrm{b}}_{\mathrm{c}}}
\nc{\Dp}{\bD^{\mathrm{perf}}}
\nc{\Dperf}{\bD^{\mathrm{perf}}}
\nc{\Dqc}{\bD_{\mathrm{qc}}}
\nc{\Du}{\bD}
\nc{\Dsing}{\bD^{\mathrm{sg}}}
\nc{\Dg}{\bD^{\mathrm{sg}}}
\def\ol{\overline}
\nc{\Rn}{\rR_{\mathrm{node}}}
\nc{\Cn}{\cC_{\mathrm{node}}}
\nc{\Dfd}[1]{\bD_{\mathrm{fd}}(#1)}
\def\bw#1#2{\textstyle{\bigwedge\hskip-0.9mm^{#1}}\hskip0.2mm{#2}}
\nc{\xrightiso}[1]{ \xrightarrow[{\ \raisebox{0.5ex}[0ex][0ex]{$\sim$}\ }]{#1} }
\nc{\thick}{\mathbf{thick}}
\DeclareMathOperator{\Hom}{\mathrm{Hom}}
\DeclareMathOperator{\Spec}{\mathrm{Spec}}
\DeclareMathOperator{\Pic}{\mathrm{Pic}}
\DeclareMathOperator{\Cl}{\mathrm{Cl}}
\DeclareMathOperator{\CH}{\mathrm{CH}}
\DeclareMathOperator{\Ker}{\mathrm{Ker}}
\DeclareMathOperator{\Coker}{\mathrm{Coker}}
\DeclareMathOperator{\Cone}{\mathrm{Cone}}
\DeclareMathOperator{\rank}{\mathrm{rk}}
\DeclareMathOperator{\codim}{\mathrm{codim}}
\theoremstyle{plain}
\newtheorem{theorem}{Theorem}[section]
\newtheorem{conjecture}[theorem]{Conjecture}
\newtheorem{lemma}[theorem]{Lemma}
\newtheorem{proposition}[theorem]{Proposition}
\newtheorem{corollary}[theorem]{Corollary}
\theoremstyle{definition}
\newtheorem{definition}[theorem]{Definition}
\newtheorem{example}[theorem]{Example}
\theoremstyle{remark}
\newcommand{\corr}[1]{#1}
\title{Homological Bondal--Orlov localization conjecture for rational singularities}
\author[M.~Mauri]{Mirko Mauri}
\address{Institute of Science and Technology Austria, 3400 Klosterneuburg, Austria.}
\email{mirko.mauri@ist.ac.at}
\author[E.~Shinder]{Evgeny Shinder}
\address{
School of Mathematics and Statistics, University of Sheffield,
Hounsfield Road, S3 7RH, UK, and
Hausdorff Center for Mathematics
at the University of Bonn, Endenicher Allee 60, 53115.
}
\email{eugene.shinder@gmail.com}
\begin{document}
\maketitle
\begin{abstract}
Given a resolution of rational singularities
$\pi\colon \tX \to X$ over a field of characteristic zero we use a Hodge-theoretic
argument to prove that the 
image of the functor
$\bR\pi_*\colon \Db(\tX) \to \Db(X)$
between bounded derived categories
of coherent sheaves
generates $\Db(X)$ as a triangulated category.
This 
gives a weak version of the Bondal--Orlov localization conjecture \cite{BO02},
answering a 
question from \cite{PS18}.
The same result is established
more generally 
for proper 
(\corr{not necessarily} birational) morphisms $\pi\colon \tX \to X$, with $\tX$ smooth, satisfying $\bR\pi_*(\cO_\tX) = \cO_X$.
\end{abstract}

\section{Introduction}

Operations of the Minimal Model Program
often correspond
to operations on the  derived categories of coherent sheaves.
The case of smooth varieties \corr{is understood relatively well. In
particular, projective} bundles, blow-ups
and standard flips
correspond to semiorthogonal decompositions
of derived categories, see the 2002 ICM report
by Bondal and Orlov \cite{BO02} and references therein.
Furthermore, the K-equivalence conjecture
of Kawamata \cite{Kawamata-K-equivalence}, 
building on ideas of Bondal and Orlov predicts that K-equivalent varieties
should have equivalent derived categories.
In particular, a crepant
resolution of a singular variety
is conjecturally unique up to derived equivalence.
For a recent survey
of various aspects
of the interplay
between birational geometry
and derived categories
for smooth projective
varieties, see \cite{Kawamata-birational},
and for conjectural relationship to rationality problems,
see \cite{Kuznetsov-rationality}.

On the other hand, much less is known
for \corr{singular varieties. In particular, the relationship between properties
of the derived category of a singular
variety
and its resolution is still unclear.} Of course this relationship should depend
on the type of singularities of $X$. 
Recall that a variety
 $X$ over a field of characteristic zero has rational singularities
if
the derived pushforward $\bR\pi_*\cO_\tX$ coincides
with $\cO_X$ for 
some (hence, every) resolution $\pi\colon \tX \to X$.
One old major open question is the \emph{Bondal--Orlov localization conjecture}:

\begin{conjecture}
\cite[Section 5]{BO02},
\cite[Conjecture 1.9]{Ef20}
\label{conj:BO}
Let $\pi\colon \tX \to X$ be a resolution of rational singularities.
Then the functor
$
\bR\pi_*\colon \Db(\tX) \to \Db(X)$
between bounded
derived categories of coherent
sheaves is a Verdier localization,
that is the induced functor \[\ol{\bR\pi_*}\colon \Db(\tX) / \Ker(\bR\pi_*) \to \Db(X)\] is an equivalence.

\end{conjecture}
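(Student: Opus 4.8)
The plan is to realize $\bR\pi_*$ as a Bousfield-type localization, using the adjunctions $\bL\pi^* \dashv \bR\pi_* \dashv \pi^!$ together with the following standard criterion: a triangulated functor $F\colon \cT \to \cS$ that admits a fully faithful left adjoint $L$ identifies $\cS$ with the Verdier quotient $\cT/\Ker(F)$. Indeed, full faithfulness of $L$ means the unit $\id \to FL$ is an isomorphism; for any $G \in \cT$ the counit triangle $LFG \to G \to C$ then has $FC = 0$, while $\Hom(LS, K) = \Hom(S, FK) = 0$ for every $K \in \Ker(F)$. Hence $\cT = \langle \Ker(F), \Ima(L)\rangle$ is a semiorthogonal decomposition, and the composite $\Ima(L) \hookrightarrow \cT \to \cT/\Ker(F)$ is an equivalence which realizes the induced functor $\ol F$ as an equivalence. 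So it suffices to produce a fully faithful left adjoint of $\bR\pi_*$ at the level of $\Db$.

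First I would check the input to this criterion at the level of the large categories. The projection formula gives $\bR\pi_* \bL\pi^* F \cong F \otimes^{\bL} \bR\pi_*\cO_\tX$, and the rational singularity hypothesis $\bR\pi_*\cO_\tX \cong \cO_X$ makes the unit $F \to \bR\pi_*\bL\pi^* F$ an isomorphism for every $F$. Thus $\bL\pi^*\colon \Dqc(X) \to \Dqc(\tX)$ is a fully faithful left adjoint of $\bR\pi_*$, and the criterion identifies $\Dqc(X)$ with $\Dqc(\tX)/\Ker(\bR\pi_*)$. This unbounded, quasi-coherent form of the statement is essentially formal.

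The real content, and the step I expect to be the main obstacle, is to descend this equivalence to bounded coherent complexes. Because $X$ is singular, $\bL\pi^*$ does not preserve $\Db$: a single coherent sheaf can have unbounded derived pullback, so $\bL\pi^*$ no longer supplies a section $\Db(X) \to \Db(\tX)$ and the clean semiorthogonal splitting is unavailable inside $\Db$. Two things must instead be established directly. (i) \emph{Essential surjectivity}: every $F \in \Db(X)$ is isomorphic to $\bR\pi_* G$ for some $G \in \Db(\tX)$. The generation result proved in this paper only places $F$ in the thick subcategory generated by the image, and one must upgrade ``generated by'' to ``equal to,'' for instance by building a bounded coherent model $G$ for $F$ whose unbounded pullback tails lie in $\Ker(\bR\pi_*)$. (ii) \emph{Full faithfulness} of $\ol{\bR\pi_*}$: the roof-calculus groups $\Hom_{\Db(\tX)/\Ker(\bR\pi_*)}(A,B)$ must coincide with $\Hom_{\Db(X)}(\bR\pi_* A, \bR\pi_* B)$, equivalently the comparison $\Db(\tX)/\Ker(\bR\pi_*) \to \Dqc(\tX)/\Ker(\bR\pi_*) \simeq \Dqc(X)$ must be fully faithful with image $\Db(X)$.

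To control these finiteness issues I would work with the exceptional inverse image $\pi^!$ and Grothendieck duality in place of $\bL\pi^*$: for rational singularities $\pi^!\cO_X$ is accessible through Grauert--Riemenschneider vanishing, and dualizing lets one produce bounded coherent lifts at the level of sheaves and convert essential surjectivity into a vanishing statement supported on the exceptional locus. The crux is precisely the interchange of the formal localization picture of the second paragraph with the coherence and boundedness constraints of $\Db$, and I expect it to require either vanishing theorems specific to the singularities of $X$ or a dévissage reducing everything to the exceptional fibres — which is exactly why the statement remains a conjecture in general.
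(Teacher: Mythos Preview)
The statement you are attempting is labeled as a \emph{Conjecture} in the paper, and the paper does not prove it; it explicitly says the conjecture ``remains wide open'' in general. So there is no proof in the paper to compare your proposal against, and your proposal is not a proof either --- to your credit, your final sentence acknowledges exactly this.

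Your diagnosis of the obstruction is correct and matches the paper's own discussion in the introduction: the $\Dqc$ version is a formal consequence of the projection formula and $\bR\pi_*\cO_{\tX}=\cO_X$, whereas the $\Db$ version is hard precisely because $\bL\pi^*$ fails to land in $\Db$ when $X$ is singular, so the semiorthogonal splitting you set up in your first paragraph is unavailable there.

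One correction is warranted. You write that the paper's generation result ``only places $F$ in the thick subcategory generated by the image.'' This undersells what the paper proves: Theorem~\ref{thm:main-intro} gives generation as a \emph{triangulated} category, using only cones and shifts and \emph{not} direct summands. That distinction is the whole point --- it is what yields surjectivity on $\rG_0$ and what allows Corollary~\ref{cor:cat-contraction} to conclude that fullness of $\ol{\bR\pi_*}$ alone would already imply the full conjecture. So part (a) $\Rightarrow$ part (b), a reduction your outline does not reach.

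Your closing suggestion to trade $\bL\pi^*$ for $\pi^!$ does not obviously buy anything: for singular $X$ the functor $\pi^!$ also fails to preserve $\Db$ (it involves the dualizing complex of $X$, which is not perfect), and Grauert--Riemenschneider $\bR\pi_*\omega_{\tX}\simeq\omega_X$ is Grothendieck-dual to the hypothesis you already have rather than new input. The d\'evissage to exceptional fibres you allude to is closer in spirit to what the paper actually carries out for its weaker Theorem~\ref{thm:main-intro}, but there the key ingredient is Hodge-theoretic (Lemma~\ref{lem:Steenbrink-NEW}), not duality-theoretic.
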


It is convenient to split
this statement into two parts:
\begin{enumerate}
    \item[(a)] the induced functor $\ol{\bR\pi_*}\colon \Db(\tX) / \Ker(\bR\pi_*) \to \Db(X)$ is fully faithful;
    \item[(b)] the functor $\bR\pi_*\colon \Db(\tX) \to \Db(X)$
    is essentially surjective.
\end{enumerate}

Validity of Conjecture \ref{conj:BO}
does not depend on 
the choice of a resolution
of $X$, at least in characteristic zero \cite[Lemma 2.31]{PS18}.
Understanding
this conjecture is essential for 
linking the Minimal Model Program to operations
on derived categories, as well as for understanding derived categories
of singular varieties. Indeed, 
a typical investigation of $\Db(X)$ proceeds by descending results
from $\Db(\tX)$ to $\Db(X)$, typically restricting to cases
when the localization conjecture holds, see e.g. \cite{KKS20, KSabs} for how this strategy can be implemented.

Recent progress on 
Conjecture \ref{conj:BO} 
includes \cite{Ef20} 
\cite{BKS18},
\cite{PS18}.
Notably it
holds for cones
over
projectively normal smooth Fano varieties
\cite{Ef20}
(as explained in \cite[Example 5.4, Corollary 5.6]{KSabs}),
all quotient singularities in characteristic zero \cite[Theorem 2.30]{PS18}
and singularities admitting
a resolution with $1$-dimensional fibers \cite[Theorem 2.14]{BKS18},
however in general it remains
wide open.

Our main result is the
weaker version
of (b) which answers the question
asked
in \cite[Introduction]{PS18}.
Let $\rG_0(X) := \rK_0(\Db(X))$
be the Grothendieck group; it is isomorphic
to the Grothendieck group of the abelian category of
coherent sheaves on $X$.
The group $\rG_0(X)$ is covariantly
functorial
for proper morphisms, hence plays the role
of a
K-theoretic version 
of the 
Borel--Moore homology, similarly to 
Chow groups.
By the `Homological Bondal--Orlov localization' we mean
the implications
of Conjecture
\ref{conj:BO} for $\rG_0(X)$
as explained in
\cite[\S 4 of the Introduction]{PS18}.
We prove the following:

\begin{theorem}\label{thm:main-intro}
Let $\pi\colon \tX \to X$
be a proper morphism of algebraic varieties
over a field $\kk$ of characteristic zero with $\tilde{X}$ smooth and satisfying
$\bR\pi_*(\cO_\tX) = \cO_X$.
Then the image of
$\bR\pi_*\colon \Db(\tX) \to \Db(X)$
generates 
$\Db(X)$ as \corr{a
triangulated category. In particular,}
the induced homomorphism
$
\bR\pi_*\colon \rG_0(\tX) \to \rG_0(X)
$
is surjective.
\end{theorem}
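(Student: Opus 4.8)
The plan is to prove the a priori stronger assertion that for every integral closed subvariety $Z\subseteq X$, with inclusion $i_Z\colon Z\hookrightarrow X$, the object $(i_Z)_*\cO_Z$ lies in the thick triangulated subcategory $\cT:=\thick\!\big(\bR\pi_*\Db(\tX)\big)\subseteq\Db(X)$. This would suffice: every coherent sheaf on $X$ admits a finite filtration whose graded pieces are of the form $(i_Z)_*\cI$ with $Z\subseteq X$ integral closed and $\cI\subseteq\cO_Z$ a coherent ideal (possibly $\cI=\cO_Z$) — this is a standard dévissage, first to the torsion-free part over each integral component of the support and then, by $\cI\cong$ an ideal sheaf after clearing denominators — and since $(i_Z)_*\cI=\Fiber\big((i_Z)_*\cO_Z\to(i_Z)_*(\cO_Z/\cI)\big)$ with $\cO_Z/\cI$ supported on a proper closed subscheme of $Z$, a Noetherian induction on supports reduces the triangulated generation of $\Db(X)$ to the claim about the $(i_Z)_*\cO_Z$. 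The final assertion on $\rG_0$ is then formal, since $\rK_0$ of a triangulated category is generated by the classes of the objects of any generating set (cf.\ \cite[\S 4 of the Introduction]{PS18}).

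I would argue by Noetherian induction: fix an integral closed $Z\subseteq X$ and assume that every object of $\Db(X)$ supported on a proper closed subscheme of $Z$ already lies in $\cT$ (by the reduction above, this is exactly the hypothesis that $(i_{Z''})_*\cO_{Z''}\in\cT$ for all integral closed $Z''\subsetneq Z$). Since $\bR\pi_*\cO_\tX=\cO_X$ and $X$ is integral, $\pi$ is surjective, hence so is $\pi^{-1}(Z)\to Z$, and I may fix an irreducible component $E_0$ of $\pi^{-1}(Z)_{\mathrm{red}}$ that dominates $Z$. Let $\rho\colon\tilde E_0\to E_0$ be a resolution of singularities and $\bar h\colon\tilde E_0\to Z$ the induced proper surjective morphism, so that $\tilde E_0$ is smooth and $i_Z\circ\bar h$ equals the composition $j\colon\tilde E_0\xrightarrow{\ \rho\ }E_0\hookrightarrow\tX\xrightarrow{\ \pi\ }X$. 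Then for every $F\in\Db(\tilde E_0)$ one has $(i_Z)_*\bR\bar h_*F\cong\bR\pi_*\big(\bR j_*F\big)$ with $\bR j_*F\in\Db(\tX)$, whence $(i_Z)_*\bR\bar h_*\Db(\tilde E_0)\subseteq\cT$.

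The Hodge-theoretic input would enter at the next step: by the theory of the Deligne--Du Bois complex, the zeroth Du Bois complex $\underline\Omega^0_Z$ of $Z$ is a direct summand of $\bR\bar h_*\cO_{\tilde E_0}$ in $\Db(Z)$. For a birational $\bar h$ this is the classical splitting property of $\underline\Omega^0$; in general it follows from the functoriality of $\underline\Omega^0$ applied to a hyperresolution of $Z$ refining the proper surjective morphism $\bar h$ from the smooth variety $\tilde E_0$. Hence $(i_Z)_*\underline\Omega^0_Z\in\cT$. On the other hand, the canonical morphism $\cO_Z\to\underline\Omega^0_Z$ is an isomorphism over the smooth locus $Z^{\mathrm{sm}}$ (smooth varieties being Du Bois), so its cone $C$ is supported on $Z^{\mathrm{sing}}$, a proper closed subscheme of the variety $Z$; by the inductive hypothesis $(i_Z)_*C\in\cT$. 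The distinguished triangle $\cO_Z\to\underline\Omega^0_Z\to C\xrightarrow{+1}$, pushed forward along $i_Z$, then exhibits $(i_Z)_*\cO_Z$ as an extension of two objects of $\cT$, so $(i_Z)_*\cO_Z\in\cT$, closing the induction.

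The hard part — and the one place where characteristic zero and Hodge theory are genuinely needed — is the claim that $\underline\Omega^0_Z$ splits off $\bR\bar h_*\cO_{\tilde E_0}$; everything else is formal dévissage. Two points would require care: first, $E_0\to Z$, hence $\bar h$, need not be birational, so one must invoke the Du Bois summand statement in the generality of arbitrary proper surjective morphisms from smooth varieties, not merely of resolutions; second, one must check that the Du Bois defect $C$ is supported in dimension $<\dim Z$, which is immediate because $Z^{\mathrm{sm}}$ is dense in $Z$. (One may alternatively try to reduce at the outset to the case where $\pi$ itself is birational, replacing $\pi$ by a common resolution with a fixed resolution of singularities of $X$; but keeping $\pi$ general and only resolving the subvarieties $E_0$ seems cleaner.)
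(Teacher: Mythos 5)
There is a genuine gap, and it is exactly the one the paper warns about in the Introduction: your induction runs inside the \emph{thick} subcategory $\cT=\thick(\bR\pi_*\Db(\tX))$, i.e.\ you allow direct summands, and the only way $\underline{\Omega}^0_Z$ enters $\cT$ in your argument is \emph{as a direct summand} of $\bR\bar h_*\cO_{\tilde E_0}$. But the theorem asserts generation by shifts and cones only; generation up to summands is strictly weaker and, by Thomason's classification of dense triangulated subcategories, carries no information about $\rG_0$ (a dense subcategory can have any subgroup of $\rK_0$ as its image), so your final ``formal'' deduction of the surjectivity of $\rG_0(\tX)\to\rG_0(X)$ does not follow from what you prove. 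The paper's Example \ref{ex:non-rat} makes this concrete and also shows your argument proves too much: for the cone $X$ over a pointless cubic $C$ with $\pi$ the blow-up of the vertex $P$, one has $\bR\pi_*\cO_E\cong\cO_P\oplus\cO_P[-1]$, so $\cO_P$ lies in the thick closure of the image, yet $[\cO_P]$ is \emph{not} in the image of $\rG_0(\tX)\to\rG_0(X)$. Note that your proof uses the hypothesis $\bR\pi_*\cO_\tX=\cO_X$ only to get surjectivity of $\pi$, so if it were correct it would apply verbatim to this example and contradict it; the hypothesis must enter the argument in an essential way, as it does in the paper via $\cO$-acyclicity of the (reduced, snc-ified) fibers.

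Separately, the Hodge-theoretic claim you rely on is false in the stated generality: for an arbitrary proper surjective $\bar h\colon \tilde E_0\to Z$ with $\tilde E_0$ smooth, $\underline{\Omega}^0_Z$ need not be a direct summand of $\bR\bar h_*\cO_{\tilde E_0}$. Take $Z$ a nodal cubic (Du Bois, so $\underline{\Omega}^0_Z\cong\cO_Z$) and $\bar h$ its normalization $\P^1\to Z$: then $\bar h_*\cO_{\P^1}$ is torsion-free and $\bar h_*\cO_{\P^1}/\cO_Z$ is a skyscraper at the node, so no retraction $\bR\bar h_*\cO_{\P^1}\to\cO_Z$ exists (applying $\cH^0$ to a derived splitting would split the sheaf extension). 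The true statements in the Du Bois literature (Kov\'acs's splitting criterion, or Schwede's $\underline{\Omega}^0_Z\simeq\bR\pi_*\cO_E$ with $E$ the \emph{reduced preimage} of $Z$ in a log resolution of an embedding) either go in the opposite logical direction or require a different geometric setup. The paper avoids both problems by a different mechanism: it snc-ifies the reduced fibers (Lemma \ref{lem:snc-fication}), uses the hypothesis $\bR\pi_*\cO_\tX=\cO_X$ plus Hodge theory to show these reduced fibers are $\cO$-acyclic (Lemma \ref{lem:Steenbrink-NEW}), and then the cone of the \emph{natural map} $\cO_Z\to\bR(\pi\sigma)_*\cO_E$ is supported on a proper closed subset of $Z$ (Lemma \ref{lem:FZ}), so the induction on dimension of support proceeds with cones only and never needs summands.
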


Here we say that a set of objects
$\cS \subset \Db(X)$ generates $\Db(X)$ 
as a triangulated
category if every
object of $\Db(X)$ can be obtained
from the objects of $\cS$ by 
iterating
cones and \corr{shifts. 
Importantly,} taking
direct summands is not required (this would make the statement much weaker, with no implications
about the map on $\rG_0$).

We prove this by reducing to $\kk = \CC$
and using an argument from
Hodge theory going back
to Steenbrink's work on Hodge
theory of singularities \cite{Steenbrink-mixed}
(see Lemma \ref{lem:Steenbrink-NEW}
for this step).
Theorem \ref{thm:main-intro} applies in particular to resolutions of rational singularities.
We note that this kind of surjectivity is specific to derived categories of coherent sheaves and $\rG_0(X)$; it is easy to find examples when it fails
for other kinds of Borel--Moore homology theories, such as Chow groups or singular homology, see Examples 
\ref{rem:homology1}
and \ref{rem:homology2}.

Let us explain why Theorem \ref{thm:main-intro}, 
as well as
the full
Bondal--Orlov localization Conjecture
\ref{conj:BO},
is a nontrivial and subtle
statement. 
The key reason why the condition $\bR\pi_*(\cO_\tX) = \cO_X$ is
relevant when studying derived
categories of coherent sheaves is the projection formula: for every $\cF \in \Db(X)$
\[
\bR\pi_* \bL\pi^*(\cF) \simeq \cF \otimes 
\bR\pi_*(\cO_{\tX}) = \cF,
\]
and this motivates at least part (b)
of Conjecture \ref{conj:BO}.
However, unless $\cF$ is a perfect complex,
$\bL\pi^*(\cF)$ is a complex
unbounded to the left, whereas the question
is about existence of a bounded complex on $\tX$. Thus,
a certain truncation of $\bL\pi^*(\cF)$
may be required,
but a truncation does not immediately 
yield the complex we need, see 
\cite[Lemma 7.4]{Kawamata-singular}
where it is shown that $\cF$
is a direct summand of the image
of a truncation of $\bL\pi^*(\cF)$.
In particular
we do not have a canonical lifting
of a complex $\cF \in \Db(X)$
or even its
class $[\cF] \in \rG_0(X)$ to 
$\tX$.

Our approach is 
indirect: using standard K-theory
arguments it suffices to consider
the structure
sheaves $\cO_Z \in \Db(X)$
of closed
subvarieties $Z \subset X$.
We construct a birational modification $\sigma \colon \tX' \to \tX$ such that the fibers of $\pi \sigma$, with their reduced scheme structure, have simple normal crossings, see Lemma \ref{lem:snc-fication}. Hodge theory is used to prove that
these
fibers have no higher
cohomology for the structure sheaf,
see Proposition \ref{prop:rational-acyclic}
and Lemma \ref{lem:Steenbrink-NEW}.
We call
the latter property of a morphism \emph{$\cO$-acyclicity}, and we use it to show that $\cO_Z$ lies in the image of $\bR\pi_*$ eventually up to a small correction term, namely that $\cO_{Z}$ coincides with $\bR\pi_*(\bR\sigma_*\cO_{(\pi \sigma)^{-1}(Z)_{\mathrm{red}}})$ away from a proper subset of $Z$, see Lemma \ref{lem:FZ}. Finally, we deduce Theorem \ref{thm:main-intro}
by induction on the dimension of $Z$,
see Proposition \ref{prop:acylic-surjective}.

From Theorem \ref{thm:main-intro} we can deduce
that part (b) of the Bondal--Orlov
localization 
(essential surjectivity)
already follows if we know part (a) 
(fully faithfulness):

\begin{corollary}\label{cor:cat-contraction}
Under assumptions of
Theorem \ref{thm:main-intro},
if 
$\ol{\bR\pi_*}\colon \Db(\tX)/\Ker(\bR\pi_*) \to \Db(X)$ 
is full (i.e. surjective on $\Hom$-spaces), then
$\ol{\bR\pi_*}$ is essentially surjective. In particular, if $\ol{\bR\pi_*}$ is
fully faithful,
then it
is a Verdier localization.
\end{corollary}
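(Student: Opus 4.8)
The plan is to deduce Corollary~\ref{cor:cat-contraction} as a formal consequence of Theorem~\ref{thm:main-intro} together with the assumption that $\ol{\bR\pi_*}$ is full. Write $\cK = \Ker(\bR\pi_*) \subset \Db(\tX)$ and let $\cT = \Db(\tX)/\cK$ be the Verdier quotient, so that $\bR\pi_*$ factors as $\Db(\tX) \xrightarrow{Q} \cT \xrightarrow{\ol{\bR\pi_*}} \Db(X)$. Let $\cI \subset \Db(X)$ be the essential image of $\ol{\bR\pi_*}$, i.e. the full subcategory of objects isomorphic to $\ol{\bR\pi_*}(A)$ for some $A \in \cT$. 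The goal is to show $\cI = \Db(X)$.

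First I would observe that $\cI$ is a \emph{strictly full triangulated subcategory} of $\Db(X)$: it is closed under shifts because $\ol{\bR\pi_*}$ is an exact functor, and it is closed under cones precisely because of the hypothesis that $\ol{\bR\pi_*}$ is full. Indeed, given a morphism $f\colon \ol{\bR\pi_*}(A) \to \ol{\bR\pi_*}(B)$ in $\Db(X)$, fullness produces a morphism $\tilde f\colon A \to B$ in $\cT$ with $\ol{\bR\pi_*}(\tilde f) = f$; since $\cT$ is triangulated we may form a distinguished triangle $A \to B \to C \to A[1]$, and applying the exact functor $\ol{\bR\pi_*}$ gives a distinguished triangle in $\Db(X)$ whose first map is $f$, so $\Cone(f) \cong \ol{\bR\pi_*}(C) \in \cI$. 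Because every distinguished triangle in $\Db(X)$ is isomorphic to one built from a cone, $\cI$ is closed under cones of arbitrary morphisms between its objects, hence is triangulated. (It is automatically strictly full since we defined it by isomorphism classes.)

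Next I would invoke Theorem~\ref{thm:main-intro}: the image of $\bR\pi_*\colon \Db(\tX)\to\Db(X)$ generates $\Db(X)$ as a triangulated category, meaning $\Db(X)$ is the smallest strictly full triangulated subcategory containing all objects $\bR\pi_*(\cG)$, $\cG\in\Db(\tX)$ — here one uses the paper's convention that generation allows only cones and shifts, not summands. But every such object $\bR\pi_*(\cG) = \ol{\bR\pi_*}(Q(\cG))$ lies in $\cI$. Since $\cI$ is a strictly full triangulated subcategory containing this generating set, minimality forces $\cI = \Db(X)$, i.e. $\ol{\bR\pi_*}$ is essentially surjective. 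For the final assertion: if $\ol{\bR\pi_*}$ is fully faithful it is in particular full, so by what we have just shown it is also essentially surjective, hence an equivalence $\Db(\tX)/\Ker(\bR\pi_*) \xrightarrow{\sim} \Db(X)$, which is exactly the statement that $\bR\pi_*$ is a Verdier localization.

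The only subtle point — and hence the step I would write most carefully — is the passage from fullness of $\ol{\bR\pi_*}$ to closure of $\cI$ under cones. One must make sure that the cone of an arbitrary morphism $f$ between objects of $\cI$, not merely of a morphism of the special form $\ol{\bR\pi_*}(\tilde f)$, lands in $\cI$; this is handled by first replacing $f$ up to isomorphism by such a special morphism using fullness, and using that cones are well-defined up to (non-canonical) isomorphism in a triangulated category. No subtlety arises from the quotient $\cT$ being a quotient rather than an honest category of complexes, since we only use that it is triangulated and that $Q$ and $\ol{\bR\pi_*}$ are exact. Everything else is formal, so the corollary is genuinely a short deduction once Theorem~\ref{thm:main-intro} is in hand.
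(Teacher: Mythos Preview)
Your proof is correct and follows essentially the same route as the paper: you show that the essential image of $\ol{\bR\pi_*}$ is a triangulated subcategory of $\Db(X)$ (using fullness to get closure under cones), and then conclude by Theorem~\ref{thm:main-intro} that it must be all of $\Db(X)$. The paper's proof is the same argument stated more tersely; your added remarks about why fullness suffices for closure under cones are accurate and harmless.
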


Thus, in the terminology
of \cite{KSabs, KS:hfd},
$\bR\pi_*$ is a Verdier localization
if and only if it is a so-called
categorical contraction, which is a priori a weaker statement meaning Verdier localization
up to direct summands.

\subsection*{Notation and conventions}

We work over a field $\kk$ of characteristic zero. Unless stated otherwise,
our varieties are assumed to be reduced and
quasi-projective over $\kk$.

\subsection*{Acknowledgements}
We thank 
Agnieszka Bodzenta-Skibińska, Paolo Cascini, 
Wahei Hara,
Sándor Kovács, Alexander Kuznetsov,
Mircea Musta\k{t}\u{a}, Nebojsa Pavic,
Pavel Sechin,
Michael Wemyss
for discussions and e-mail correspondence. We also thank the anonymous referee for the helpful comments.

\section{$\cO$-acyclic morphisms
and surjectivity of $\bR\pi_*$}

When working with derived categories
and rational singularities, the following
property is relevant.

\begin{definition}
\label{def:O-acyclic}
Let $\pi: \tX \to X$ be a surjective
proper 
morphism
with geometrically connected fibers. 
We say that $\pi$ is {\sf $\cO$-acyclic
at $x \in X$}
if 
$\rH^{>0}(\pi^{-1}(x)_{\mathrm{red}}, \cO) = 0$.

Here $x \in X$ is a scheme point
and $\pi^{-1}(x)$ is a
scheme of finite type over $\kk(x)$.
Note that because we assume fibers to be geometrically connected we have
\begin{equation}
\label{eq:fibers}
\rH^{*}(\pi^{-1}(x)_{\mathrm{red}}, \cO) = \kk[0].
\end{equation}

We say that $\pi$ is {\sf $\cO$-acyclic}
if it is acyclic at each point $x \in X$
and we say that
$\pi$ is {\sf $\cO$-acyclic
after a modification} if 
there exists
a proper
morphism $\sigma \colon \tX' \to \tX$
such that
$\pi \sigma$ is
$\cO$-acyclic.
\end{definition}

The following Lemma shows in particular
that to check that $\pi$
is $\cO$-acyclic it suffices to consider
fibers over 
closed points of $X$.
Here $\pi$ is the morphism as in Definition \ref{def:O-acyclic}.

\begin{lemma}\label{lem:closed-points}
$\pi$ is $\cO$-acyclic at $x \in X$
if and only if it is acyclic at
general closed 
points $x' \in \ol{\{x\}}$.
\end{lemma}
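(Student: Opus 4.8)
The statement to prove is Lemma~\ref{lem:closed-points}: that $\pi$ is $\cO$-acyclic at a scheme point $x \in X$ if and only if it is $\cO$-acyclic at general closed points $x'$ of the closure $\ol{\{x\}}$. The natural tool here is generic flatness together with cohomology and base change, applied to the reduced fibers. The plan is to work over the integral subscheme $W = \ol{\{x\}}$ with its reduced structure, so that $x$ is the generic point of $W$, and to study the family of reduced fibers as we vary over $W$.

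First I would set up the relevant family. Let $\tX_W = (\pi^{-1}(W))_{\mathrm{red}}$ and let $\pi_W \colon \tX_W \to W$ be the induced morphism; note $\pi_W^{-1}(x)_{\mathrm{red}} = \pi^{-1}(x)_{\mathrm{red}}$ and, for a closed point $x' \in W$, the fiber $\pi_W^{-1}(x')$ maps onto $\pi^{-1}(x')_{\mathrm{red}}$ with the same underlying topological space, so they have the same structure-sheaf cohomology after reducing. Now apply generic flatness: there is a dense open $U \subseteq W$ over which $\pi_W$ is flat, and on which moreover (by Grothendieck's semicontinuity / cohomology and base change, shrinking $U$ further) the formation of $\rR^i (\pi_W)_* \cO_{\tX_W}$ commutes with base change and these sheaves are locally free. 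After this shrinking, for every point $w \in U$ (including the generic point $x$ and the general closed points of $W$) one has $\rH^i(\pi_W^{-1}(w), \cO) = \rR^i(\pi_W)_*\cO_{\tX_W} \otimes \kk(w)$, so the dimensions $h^i$ of these cohomology groups are the same for the generic point and for general closed points. The subtlety that the scheme fiber $\pi_W^{-1}(w)$ need not be reduced is handled as follows: over the generic point $x$ the fiber is already reduced (it equals $\pi^{-1}(x)_{\mathrm{red}}$ by construction of $\tX_W$), and reducedness is an open condition on a family that is flat with geometrically reduced generic fiber — or more simply, one shrinks $U$ once more so that all fibers of $\pi_W$ over $U$ are geometrically reduced (EGA~IV, 9.7.7), hence $\pi_W^{-1}(w) = \pi^{-1}(w)_{\mathrm{red}}$ for $w \in U$. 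This gives both implications at once over $U$: $\rH^{>0}(\pi^{-1}(x)_{\mathrm{red}},\cO) = 0$ if and only if $\rH^{>0}(\pi^{-1}(x')_{\mathrm{red}},\cO) = 0$ for $x'$ in the dense open subset $U \cap \{\text{closed points}\}$ of $W$.

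The main obstacle I anticipate is precisely the reducedness issue — ensuring that $\cO$-acyclicity, which is defined in terms of the \emph{reduced} fiber rather than the scheme-theoretic fiber, is the quantity controlled by base change. Cohomology and base change is a statement about scheme-theoretic fibers of a flat morphism, so passing to reduced structures requires the extra generic-reducedness shrink; without characteristic-zero hypotheses generic geometric reducedness could fail, but we are in characteristic zero so this is automatic. A secondary point to be careful about is geometric connectedness of the fibers (needed for the normalization \eqref{eq:fibers}, i.e. $h^0 = 1$), but this is preserved on a dense open by the same semicontinuity arguments, or simply inherited from the hypothesis on $\pi$. Once these set-theoretic/scheme-theoretic bookkeeping points are settled, the proof is a direct application of generic flatness and cohomology-and-base-change.
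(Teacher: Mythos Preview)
Your approach is correct and essentially the same as the paper's: restrict to the reduced preimage $E$ of $Z = \ol{\{x\}}$, apply generic flatness to $E \to Z$, and invoke cohomology and base change (the paper simply cites \cite[Corollary~12.9]{Hartshorne}). Your version is in fact more careful than the paper's one-line proof, since you explicitly address the point that the scheme-theoretic fiber of $E \to Z$ at a closed point need not coincide with $\pi^{-1}(x')_{\mathrm{red}}$ and resolve it by shrinking so that all fibers are geometrically reduced.
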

\begin{proof}
Let $Z = \ol{\{x\}}$
and $E$ be the reduced $\pi$-preimage
of $Z$. 
The result follows from \cite[Corollary 12.9]{Hartshorne} and the  generic flatness of the restricted morphism $\pi_Z\colon E \to Z$.
\end{proof}

In the case when $\pi$ is flat and 
all fibers are reduced, 
being $\cO$-acyclic is equivalent to $\bR\pi_*(\cO_{\tX}) = \cO_X$ by base change.
In general the relation between these two properties is quite subtle.

\begin{example}
Let
$\pi\colon \tX \to X$ be 
a resolution of singularities.
Then $\pi$ can be $\cO$-acyclic, 
but $\rR^{>0}\pi_*(\cO_\tX) \ne 0$, because
higher derived pushforwards involve
thickenings in a nontrivial way.

\begin{enumerate}
    \item[(1)] 
     The map $\pi$ can be chosen as a log resolution of a non-rational surface singularity whose exceptional locus $E$ is a tree of rational curves. For instance, consider the Brieskorn singularity $X$ given by the equation $ x^2+y^3+z^7=0$. The exceptional curve of its minimal resolution (the weighted blow-up with weights $(3,2,1)$) is the cuspidal cubic $C$ given by  $x^2+y^3=0$ in $\bP(3,2,1)$,
     and so \corr{$X$ does not have rational singularities} by Artin's rationality criterion since $\rH^1(C, \cO)=\kk$, see \cite[Proposition 1]{Artin1966}
     or Example \ref{ex:Artin} below. 
     On the other hand, by blowing-up the minimal resolution three times, we obtain a log resolution $\pi \colon \tX \to X$ whose exceptional locus $E$ is a tree of rational curves: its dual graph is $D_4$; the rational curve in the middle has self intersection $-1$, and the other ones $-2$, $-3$, $-7$. Note that $\rH^{1}(E, \cO)=0$, which means that $\pi$ is $\cO$-acyclic.

\item[(2)] Consider the affine cone $X$
over a smooth projective variety $Y \subset \P^N$ with  $\rH^{>0}(Y, \cO)=0$ but $\rH^{>0}(Y, \cO(1))\neq 0$, see \cite[Proposition 3.13]{Kollar-singMMP}. For instance
one can take the surface suggested
by Jason Starr in \cite{MO-Starr}:
$Y \subset \P^2 \times \P^1$
is a divisor of bidegree $(d,1)$ with $d \ge 4$. 
Then the blow-up of the vertex $\pi\colon \tX \to X$ 
has reduced $\cO$-acyclic fibers (fibers consist
of a point or of $Y$) but $X$ is
not a rational singularity.
\end{enumerate}
\end{example}

In the opposite direction we have the following:

\begin{proposition}\label{prop:rational-acyclic}
If $\pi\colon \tX \to X$ is a proper morphism with $\tX$ smooth and 
satisfying
$\bR\pi_*(\cO_\tX) = \cO_X$, then $\pi$
is $\cO$-acyclic after a modification.
\end{proposition}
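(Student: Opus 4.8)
The plan is to produce the modification $\sigma\colon \tX' \to \tX$ by resolving the reduced fibers of $\pi$, and then to check $\cO$-acyclicity of $\pi\sigma$ fiber by fiber using Hodge theory. By Lemma \ref{lem:closed-points} it suffices to make the reduced fibers over general closed points of every irreducible closed subset $Z \subseteq X$ have vanishing higher cohomology of the structure sheaf; since there are only countably many such $Z$ and each contributes a dense open locus, a single modification will do. First I would invoke the (forthcoming) Lemma \ref{lem:snc-fication}: there is a proper birational $\sigma\colon \tX' \to \tX$ with $\tX'$ smooth such that for every closed point $x \in X$ the reduced fiber $(\pi\sigma)^{-1}(x)_{\mathrm{red}}$ is a simple normal crossings variety. (Concretely, one stratifies $X$, resolves each stratum, and applies embedded resolution to the total transforms of the fibers; generic smoothness in characteristic zero is what makes the fibers over general points of each stratum as nice as one wants.)

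The core of the argument is then: if $\tY$ is smooth and proper with $\bR\rho_*\cO_{\tY}=\cO_{W}$ for the Stein factorization, and $E = \rho^{-1}(w)_{\mathrm{red}}$ is a simple normal crossings variety sitting inside $\tY$, then $\rH^{>0}(E,\cO_E)=0$. This is precisely the content of the Hodge-theoretic input advertised in the introduction as Lemma \ref{lem:Steenbrink-NEW}: after reducing to $\kk=\CC$, the point is that for an SNC compact variety $E$ the coherent cohomology $\rH^q(E,\cO_E)$ is the $(0,q)$-part (the lowest weight or Hodge piece $\Gr^0_F$) of the mixed Hodge structure on $\Ho^q(E,\CC)$, and Steenbrink's computation of the mixed Hodge structure on the exceptional fiber of a morphism satisfying $\bR\rho_*\cO = \cO$ (equivalently, the local cohomology / nearby-cycle vanishing coming from rationality-type hypotheses) forces this piece to vanish for $q>0$. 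I would apply this with $\rho = \pi\sigma$: since $\sigma$ is birational, $\bR(\pi\sigma)_*\cO_{\tX'} \simeq \bR\pi_*(\bR\sigma_*\cO_{\tX'}) \simeq \bR\pi_*\cO_{\tX} = \cO_X$, so $\pi\sigma$ still satisfies the hypothesis $\bR(\pi\sigma)_*\cO = \cO$, and $\tX'$ is smooth, so all hypotheses of the Hodge-theoretic lemma are met at every point.

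Assembling: $\pi\sigma$ is surjective, proper, with geometrically connected fibers (Stein factorization together with $\bR(\pi\sigma)_*\cO=\cO$ forces connectedness, and over a field of characteristic zero connected plus the fiber cohomology equal to $\kk$ gives geometric connectedness), and by the previous step $\rH^{>0}((\pi\sigma)^{-1}(x)_{\mathrm{red}},\cO)=0$ at every closed point $x$; Lemma \ref{lem:closed-points} upgrades this to $\cO$-acyclicity at every scheme point. Hence $\pi$ is $\cO$-acyclic after the modification $\sigma$. The main obstacle is the Hodge-theoretic vanishing Lemma \ref{lem:Steenbrink-NEW} itself — identifying $\rH^{>0}(E,\cO_E)$ with a Hodge-graded piece that Steenbrink's theory shows is killed by the hypothesis $\bR\pi_*\cO=\cO$ — but that is isolated as a separate lemma, so here it can be cited; the only genuinely new bookkeeping is checking that the modification from Lemma \ref{lem:snc-fication} can be taken to work simultaneously over general points of all strata, which follows from generic smoothness and the countability of the strata.
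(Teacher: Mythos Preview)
Your proposal is correct and matches the paper's proof essentially step for step: take $\sigma$ from Lemma~\ref{lem:snc-fication}, observe that $\bR(\pi\sigma)_*\cO_{\tX'} = \cO_X$ because $\sigma$ is birational between smooth varieties, apply Lemma~\ref{lem:Steenbrink-NEW} at each closed point, and upgrade to all scheme points via Lemma~\ref{lem:closed-points}. One caveat: the countability remark in your first and last sentences is both false (a positive-dimensional variety over an uncountable field has uncountably many irreducible closed subsets) and unnecessary --- Lemma~\ref{lem:snc-fication} already guarantees snc reduced fibers over \emph{every} closed point, so no ``dense open in each stratum, then intersect'' argument is needed.
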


This is a remarkable result
as we  
are essentially 
able to make 
a very naive 
base change statement
for a morphism $\pi$
which is not flat.
Partial results of this kind
can be obtained by the formal functions theorem (see Example \ref{ex:Artin}), however the formal functions theorem 
alone is not sufficient to control all cohomology groups
of the fibers.
We begin with the following particular
case of Proposition \ref{prop:rational-acyclic} which
we prove using 
complex algebraic geometry.

\begin{lemma} 
\label{lem:Steenbrink-NEW}
Let $\pi\colon \tX \to X$
be a proper morphism satisfying
$\bR\pi_*(\cO_\tX) = \cO_X$. 
Let $x \in X$ be a closed point
and
assume that $E = \pi^{-1}(x)_{\mathrm{red}}$
has simple normal crossings,
then $\pi$ is $\cO$-acyclic at $x$.
\end{lemma}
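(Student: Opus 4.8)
The plan is to reduce to $\kk = \CC$ and then extract the vanishing $\rH^{>0}(E,\cO_E) = 0$ from the degeneration of the Hodge-to-de Rham spectral sequence together with the weight filtration on the cohomology of the fiber, in the spirit of Steenbrink's work on limit mixed Hodge structures. First, since the statement is about a closed point $x \in X$ with $E = \pi^{-1}(x)_{\mathrm{red}}$ an snc scheme over $\kk(x)$, I would base change to the residue field and then, by a standard spreading-out and Lefschetz-principle argument, assume $\kk = \CC$, so that $E$ is a complex projective snc variety that is the reduced central fiber of $\pi$ over a point of a complex variety $X$.

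The key input is that $\bR\pi_*(\cO_\tX) = \cO_X$ controls $\rH^*(E, \cO_E)$ through the following mechanism. The condition $\bR\pi_*\cO_\tX = \cO_X$ says exactly that $\rR^{>0}\pi_*\cO_\tX = 0$ and $\pi_*\cO_\tX = \cO_X$; by the theorem on formal functions, the completed stalk $(\rR^i\pi_*\cO_\tX)^{\wedge}_x = \varprojlim_n \rH^i(E_n, \cO_{E_n})$ vanishes for $i > 0$, where $E_n$ runs over the infinitesimal thickenings of $E$ in $\tX$. This gives vanishing of the cohomology of the \emph{thickenings}, but the reduced fiber $E$ sits inside as a quotient, and the comparison $\rH^i(E_n, \cO) \to \rH^i(E, \cO_E)$ need not be surjective — this is precisely the subtlety flagged in the paragraph before Proposition~\ref{prop:rational-acyclic}. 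To bridge the gap I would use Hodge theory: embedding $E$ in a small analytic (or formal) neighborhood and appealing to the fact that, for the \emph{reduced} snc fiber of a morphism with $\bR\pi_*\cO = \cO$, the coherent cohomology $\rH^i(E,\cO_E)$ is computed by the $\Gr^0_F$-piece of a mixed Hodge structure on the de Rham cohomology of $E$ (via the Hodge filtration on the nearby/fiber complex). Concretely, one identifies $\rH^i(E,\cO_E)$ with $\Gr_F^0 \rH^i(E,\CC)$ for the canonical mixed Hodge structure on the snc variety $E$, and then shows this $F^0$-part is killed by the assumption $\bR\pi_*\cO_\tX = \cO_X$, which forces the relevant weight-graded pieces carrying $F^0$-classes to be absent.

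The main obstacle — and the technical heart of the argument — is making rigorous the claim that the purely coherent statement "$\rH^{>0}(E, \cO_E) = 0$" follows from the formal-functions vanishing of the thickenings \emph{together with} a Hodge-theoretic positivity. Formal functions alone controls $\varprojlim \rH^i(E_n,\cO)$, not $\rH^i(E_{\mathrm{red}},\cO)$; the passage requires knowing that the Hodge filtration on $\rH^i(E,\CC)$ is strict and that the graded piece $\Gr^0_F$ injects compatibly into the inverse system, so that its vanishing in the limit forces its vanishing at the bottom. I expect the cleanest route is: (i) write the mixed Hodge complex computing $\rH^*(E,\CC)$ from the snc combinatorics (the Mayer--Vietoris / cohomological descent complex of the strata of $E$), (ii) identify $F^0$ of its cohomology with $\rH^*(E,\cO_E)$, (iii) observe that $\bR\pi_*\cO_\tX = \cO_X$ implies, via proper base change in the derived sense applied to the log/nearby-cycle complex, that the weight-$0$ part of $\rH^{>0}$ vanishes, and (iv) conclude by strictness. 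Steps (i)--(ii) are standard Deligne--Steenbrink Hodge theory; step (iii) is where the hypothesis on $\pi$ genuinely enters and is the part I would need to argue most carefully, likely by a local-cohomology or comparison argument near the fiber.
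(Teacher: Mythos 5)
You have the right Hodge-theoretic half of the argument: the identification $\rH^i(E,\cO)\simeq \Gr_F^0\rH^i(E,\CC)$, i.e.\ the surjectivity of $\rH^i(E,\CC)\to\rH^i(E,\cO)$ for the compact snc variety $E$, which is also the paper's step (i)--(ii), and you correctly observe that the formal functions theorem alone cannot reach the reduced fiber. However, the step where the hypothesis $\bR\pi_*(\cO_\tX)=\cO_X$ actually enters --- your step (iii), which you yourself flag as the part needing care --- is exactly the missing idea, and the mechanisms you name for it do not work. There is no one-parameter degeneration here, so the nearby-cycle/limit mixed Hodge structure formalism is not available: $E$ is the fiber over a closed point of a base of arbitrary dimension, not the special fiber of a family over a disc. ``Proper base change in the derived sense'' for coherent cohomology is precisely what fails in the absence of flatness --- that failure is the whole point of the lemma. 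And even if you did prove that the weight-$0$ part of $\rH^{>0}(E,\CC)$ vanishes, that would not suffice: $\Gr_F^0\rH^i(E,\CC)$ receives contributions from all weights up to $i$ (for instance a component of positive genus contributes weight-one classes to $\Gr_F^0\rH^1$), so ruling out weight $0$ does not kill $\rH^{>0}(E,\cO)$.

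The bridge the paper uses is analytic and topological rather than formal. After reducing to $\kk=\CC$ (and invoking GAGA to compare algebraic and analytic pushforwards), one chooses a Stein neighbourhood $U$ of $x$, namely the trace on $X$ of a small ball; then $\bR\pi_*(\cO_\tX)=\cO_X$ gives $\rH^{>0}(\pi^{-1}(U),\cO)=0$ in the analytic topology. Moreover the inclusion $E\hookrightarrow\pi^{-1}(U)$ is a homotopy equivalence (Durfee), so restriction $\rH^*(\pi^{-1}(U),\CC)\to\rH^*(E,\CC)$ is an isomorphism. In the commutative square comparing constant-sheaf and structure-sheaf cohomology of $\pi^{-1}(U)$ and of $E$, any class in $\rH^{>0}(E,\cO)$ lifts along the Hodge-theoretic surjection from $\rH^{>0}(E,\CC)$, hence comes from $\rH^{>0}(\pi^{-1}(U),\CC)$, hence factors through $\rH^{>0}(\pi^{-1}(U),\cO)=0$. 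Some device of this kind --- letting the hypothesis act on an actual neighbourhood of the fiber over a Stein base and transporting the vanishing to $E$ through the retraction --- is what your outline lacks; without it the plan does not close.
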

\begin{proof}
When $\kk = \CC$, this result has been proved in \cite[Proposition 3.1]{DuBois81}
for resolutions of cone singularities, and in
\cite[Proposition 3.7 and \S 3.6]{Steenbrink-mixed}, \cite[Proposition 8.1.11.(ii) and 8.1.12]{Ishii2014} or \cite[Lemma 1.2]{Namikawa} for resolutions of rational singularities. We give a 
self-contained proof for completeness.

Extending the scalars,
we can assume
that $\kk$ is an 
algebraically closed field.
By the Lefschetz principle we can
further assume that $\kk = \CC$
and prove the result via Hodge theory. By the GAGA principle,
the derived pushforward $\bR\pi_*$
is the same when computed in the analytic or in the Zariski topology \cite[Expose XII Theorem 4.2]{SGA1}.
Similarly $\rH^{*}(E, \cO)$
coincides when 
computed in the analytic
or in the Zariski topology.

Let $(V, x) \subseteq (\CC^n, 0)$ be an affine neighbourhood of $x$ in $X$. The intersection of $V$ with a ball of radius $\epsilon$ centered at $0$, denoted $U \coloneqq \{ v \in V \subseteq \CC^n  | \, |v|<\epsilon\}$, is a Stein neighbourhood of $x$ in $X$. The preimage $\pi^{-1}(U)$ is a Euclidean neighbourhood of $E$, and the inclusion
$j\colon E \to \pi^{-1}(U)$
is a homotopy equivalence; see e.g.\  \cite[Proposition 1.6]{Durfee}.
We consider the following diagram
containing 
singular cohomology
and analytic sheaf cohomology
\[\xymatrix{
\rH^*(\pi^{-1}(U), \CC) \ar[d]_{j^*} \ar[r] & 
\rH^*(\pi^{-1}(U), \cO) \ar[d]^{j^*} \\
\rH^*(E, \CC) \ar[r] & 
\rH^*(E, \cO).\\
}
\]
Here the horizontal maps are induced
by sheaf inclusions $\CC \subset \cO$
and the vertical maps are restrictions. 
Since $j$ is a homotopy
equivalence, the left vertical map is an isomorphism.
By the work of Deligne \cite{DeligneIII}, the cohomology groups of the simple normal crossing variety $E$ carry a mixed Hodge structure such that $F^0 / F^1$ is canonically isomorphic
to $\rH^*(E, \cO)$ (see e.g.\ \cite[\S (1.5)]{Steenbrink-mixed}), hence the bottom horizontal map is surjective. Thus the right vertical map
is surjective as well.
Since $\bR\pi_*(\cO_\tX) = \cO_X$ and $U$ is Stein,
we have
$\rH^{>0}(\pi^{-1}(U), \cO) = 0$
and 
this implies
$\rH^{>0}(E, \cO) = 0$.
\end{proof}

We do not know if the 
conclusion
of Lemma \ref{lem:Steenbrink-NEW}
holds for the 
scheme preimage
itself, without taking reduced scheme structure. The assumption on the singularities of $E$ can be weakened to the requirement that $E$ has only Du Bois singularities, which grants the surjectivity of the morphism $\rH^*(E, \CC) \to
\rH^*(E, \cO)$; see \cite[(1.4)]{Kovacs2012} for details on Du Bois singularities. However, it is not clear whether the assumption on the singularities of $E$ can be removed at all, at least when $\pi$ is birational. 
This is true for resolution of rational singularities of dimension two or more generally admitting
a resolution with $1$-dimensional fibers by the following generalization of Artin's rationality criterion, see \cite[Proposition 1]{Artin1966}.

\begin{example} 
\label{ex:Artin}
Let $\pi\colon \tX \to X$
be a proper morphism satisfying
$\bR\pi_*(\cO_\tX) = \cO_X$. Let $x \in X$ be a schematic point, and 
$F \subset \pi^{-1}(x)$
any closed subscheme;
in particular we can take $F = \pi^{-1}(x)$ or 
$F = \pi^{-1}(x)_{\mathrm{red}}$. Suppose that $d=\dim(\pi^{-1}(x))>0$.
Then $\rH^d(F, \cO)=0$. 
Indeed, let $E_{k} \coloneqq \tX \times_X \Spec \cO_x/ \mathfrak{m}^{k}_x$, where $\mathfrak{m}_x$ is the maximal ideal of $x$.
For $k>0$ large enough,
there exists an epimorphism $\cO_{E_{k}} \to \cO_{F}$. 
For dimensional reasons, 
if
$\cK_k$ is the kernel of $\cO_{E_{k}} \to \cO_{F}$,
then
$\rH^{d+1}(E_{k},\cK_k) = 0$. 
Hence, $\rH^d(F, \cO)=0$ as soon as $\rH^d(E_{k}, \cO)=0$. But the latter group vanishes by the formal function theorem
and $\bR\pi_*(\cO_\tX) = \cO_X$:  \[0=(\bR^{d}\pi_*(\cO_\tX)_x)^{\wedge} = \lim_{\longleftarrow} \rH^d(E_{k}, \cO) \twoheadrightarrow \rH^d(E_{k}, \cO).\] 
Surjectivity of the last
map holds because
all transition maps
$\rH^d(E_{k+1}, \cO) \to
\rH^d(E_{k}, \cO)$
are surjective
by the cohomology vanishing argument above.
\end{example}

\medskip

The next result produces a desingularization of the fibers of a morphism. \corr{For completeness we reproduce a proof due to Wlodarczyk.}

\begin{lemma}\cite[Proposition 6.0.5]{WJ2016}
\label{lem:snc-fication}
Let $\pi\colon \tX \to X$
be a proper morphism with $\tX$ smooth.
Then there exists a morphism
$\sigma\colon \tX' \to \tX$
which is a composition of blow-ups along smooth centers
such that all 
fibers of
$\pi\sigma$, with reduced
scheme structure,
are simple normal crossing
varieties.
\end{lemma}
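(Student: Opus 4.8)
The plan is to argue by induction on $\dim X$, bootstrapping from the locus where $\pi$ is smooth. The base case $\dim X = 0$ is immediate: every fibre of $\pi$ is an open subscheme of the smooth variety $\tX$, hence smooth, hence a simple normal crossing variety, and one may take $\sigma = \id$. For the inductive step one may first reduce to $X$ affine, since Hironaka's resolution and principalization algorithms are canonical — they commute with smooth base change, in particular with passage to a Zariski open — so that the blow-ups constructed below over an affine cover of $X$ automatically glue to a single $\sigma$.

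Now assume $\dim X > 0$. By generic smoothness (this is where characteristic zero enters) there is a dense open $U \subseteq X$ over which $\pi$ is smooth; set $Z := (X \setminus U)_{\mathrm{red}}$, a closed subvariety with $\dim Z < \dim X$. Over $U$ every fibre of $\pi$ is smooth, hence simple normal crossing, so it only remains to correct the fibres over $Z$; crucially, this will be achieved by blow-ups whose centres lie over $Z$, which leave the fibres over $U$ untouched. Apply Hironaka principalization to the ideal sheaf $\pi^{-1}(\cI_Z)\cdot\cO_\tX$ on the smooth variety $\tX$: this yields $\sigma_1 \colon \tX_1 \to \tX$, a composition of blow-ups at smooth centres, all lying over $Z$, with $\tX_1$ smooth and $E := (\pi\sigma_1)^{-1}(Z)_{\mathrm{red}}$ a simple normal crossing divisor in $\tX_1$; moreover $\pi\sigma_1$ is still smooth over $U$. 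Thus the problem is reduced to making the reduced fibres of $\pi\sigma_1$ over $Z$ simple normal crossing, with these fibres now contained in the simple normal crossing divisor $E$.

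For the last step, let $E_1,\dots,E_r$ be the (smooth, closed) irreducible components of $E$. Each $E_j$ maps properly onto a closed subvariety of $Z$, so the induction hypothesis applied to $E_j \to Z$ furnishes a composition of blow-ups of $E_j$ at smooth centres after which the reduced fibres over $Z$ are simple normal crossing; and since $E_j$ is a smooth divisor in the smooth variety $\tX_1$, each such centre is also a smooth centre in $\tX_1$, so these blow-ups can be realised inside $\tX_1$. The \emph{main obstacle} is to carry this out for all components $E_j$ simultaneously and compatibly: resolving the fibres of the components one at a time produces modifications that need not agree along the overlaps $E_j \cap E_k$, and blowing up a centre inside one component can destroy both its normal crossings with the others and the good fibres obtained so far. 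The remedy is to run a single canonical (functorial) resolution process on the pair $(\tX_1, E)$: one blows up $\tX_1$ along smooth centres contained in $E$ (so as to stay over $Z$) and having normal crossings with $E$ (so the total transform of $E$ stays simple normal crossing), the centres being dictated by a functorial resolution invariant which is therefore intrinsic to the configuration $(\tX_1,E)$ and independent of which component one reads it from; one then shows this invariant strictly decreases until every reduced fibre of the resulting composite $\sigma$ over $Z$ — and hence over all of $X$ — is simple normal crossing. For this bookkeeping the inductive statement should be strengthened to a logarithmic version, in which the smooth source carries a simple normal crossing boundary divisor whose fibrewise behaviour is also controlled, so that the induction can be fed the pair $(\tX_1, E)$ rather than the bare components $E_j$; everything else — generic smoothness, principalization, the gluing, and the verification that blow-ups over $Z$ preserve the good fibres over $U$ — is routine.
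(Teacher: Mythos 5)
Your proposal sets off along the same lines as the paper (generic smoothness over a dense open $U$, then principalization of $\pi^{-1}(\cI_Z)\cdot\cO_{\tX}$ so that $E=(\pi\sigma_1)^{-1}(Z)_{\mathrm{red}}$ becomes an snc divisor, with centres lying over $Z$), but it has a genuine gap exactly at the step you yourself flag as the main obstacle. Your remedy --- ``run a single canonical (functorial) resolution process on the pair $(\tX_1,E)$'' governed by an invariant that ``strictly decreases until every reduced fibre over $Z$ is simple normal crossing'' --- is an assertion of the theorem, not a proof of it. The invariants underlying canonical resolution and principalization (Hironaka, Bierstone--Milman, W{\l}odarczyk) measure singularities of the ambient space, of ideals, or of divisors in the total space; they do not measure, and are not known to improve, the failure of snc-ness of the \emph{fibres} of the composite morphism to $X$, and there is no off-the-shelf theorem saying such a process terminates with all reduced fibres snc --- that fibrewise statement is essentially the content of the cited result of W{\l}odarczyk that the lemma is reproducing. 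Moreover the ``logarithmic strengthening'' of the inductive statement that is supposed to let the induction on $\dim X$ accept the pair $(\tX_1,E)$ is never formulated, so the induction cannot actually be run; and, as you note, applying the plain inductive hypothesis to each component $E_j\to Z$ separately is insufficient, since the reduced fibre over $z\in Z$ is the \emph{union} of the fibres of the $E_j$, and snc-ness of a union is a condition on how the pieces meet, which componentwise blow-ups need not arrange (and may destroy).

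The paper avoids this difficulty entirely by never trying to fix all fibres over $Z$ at once: it inducts on the codimension in $X$ of the bad locus. After log-resolving the preimage of the codimension-$k$ irreducible components $Z_i$ of the bad locus, a simple generic-smoothness argument applied to the strata of the resulting snc divisor over $Z_i$ shows that the reduced fibre over a \emph{general} point of each $Z_i$ is snc; hence the locus of bad fibres drops to codimension at least $k+1$, and after $\dim(X)$ rounds it is empty. The deeper points of $Z$ are handled by later blow-ups rather than simultaneously, which is precisely what dissolves the compatibility problem you ran into. Your argument becomes correct --- and essentially coincides with the paper's --- if at your final step you demand snc fibres only over a dense open subset of each irreducible component of $Z$ (obtained from generic smoothness on the strata of $E$ over that component) and then recurse on the resulting smaller bad locus, instead of invoking an unspecified fibrewise resolution invariant.
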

\begin{proof}
We will
construct a sequence of (compositions of) smooth 
blow-ups
\[
\tX_n \overset{\sigma_n}\longrightarrow \tX_{n-1} \longrightarrow \dots 
\longrightarrow
\tX_1 \overset{\sigma_1}\longrightarrow \tX_0 = \tX
\]
such that for all $k \le n$
the morphism $\pi \sigma_1 \cdots \sigma_k$
has snc fibers over an open subset $U_k \subset X$
with the complement
$X \setminus U_k$ of codimension at least $k+1$.
Then we can take $\tX' = \tX_n$ and $\sigma = \sigma_1 \cdots \sigma_n$ for $n = \dim(X)$.

The variety 
$\tX_0 = \tX$ satisfies
our assumptions
because by generic smoothness,
$\pi$ is smooth over a
dense open subset of $X$.
Assume that $\tX_{k-1}$ is constructed.
Let $Z = X \setminus U_{k-1}$.
By assumption $Z$ has codimension at least $k$.
If the codimension of $Z$
is strictly larger than $k$, 
we can set $\tX_k = \tX_{k-1}$.
Otherwise let $Z_1, \dots, Z_m$ be $k$-codimensional
irreducible components of $Z$.
Let $\sigma_k\colon \tX_k \to \tX_{k-1}$ be a 
sequence of smooth blow-ups which
provides a log-resolution  
of 
$(\pi \sigma_1 \cdots \sigma_{k-1})^{-1}(Z_1 \cup \dots \cup Z_m)$, 
that is 
we require that
$\sigma_k$ is an isomorphism
away from the preimages of
$Z_1, \dots, Z_m$, and the 
preimage 
of every $Z_i$
with respect to
$\pi \sigma_1 \cdots \sigma_k$
is a snc divisor.
A simple argument using generic smoothness
guarantees that there exists an open subset $U_i \subseteq Z_i$ such that for any $x \in U_i$ 
the fibers 
$(\pi \sigma_1 \cdots \sigma_k)^{-1}(x)$ 
also have
simple normal crossings, see e.g.\ \cite[Proposition 4.0.4]{WJ2016}.
Thus the set of points $x \in X$ where
$(\pi \sigma_1 \cdots \sigma_k)^{-1}(x)$
is not an snc variety is contained in a closed
subset  
of codimension at least $k+1$.
\end{proof}

\begin{proof}[Proof of Proposition \ref{prop:rational-acyclic}]
We take the modification
$\sigma$ from Lemma
\ref{lem:snc-fication}.
Since $\sigma$ is a birational modification of the smooth variety $\tX$, 
we have $\bR \sigma_*(\cO_{\tX'}) = \cO_{\tX}$,
hence $\bR (\pi\sigma)_*(\cO_{\tX'}) = \cO_X$.
We need to check that
$\pi\sigma$ is $\cO$-acyclic.
By Lemma \ref{lem:closed-points}
it suffices to prove the same
for closed points $x \in X$.
The result now follows from Lemma \ref{lem:Steenbrink-NEW}.
\end{proof}

The following two lemmas
are used in our inductive
proof of Theorem \ref{thm:main-intro}.

\begin{lemma}\label{lem:FZ}
Let $\pi\colon \tX \to X$
be $\cO$-acyclic
at $x \in X$.
Let $Z \subset X$ be the 
closure of $x$
and $E \subset \tX$
be the reduced preimage of $Z$.
Consider the complex
\[
\cF_Z^\bullet := \Cone(\cO_Z \to \bR\pi_*(\cO_E)).
\]
Then $\cF_Z^\bullet$ is supported on a proper
closed subset of $Z$.
\end{lemma}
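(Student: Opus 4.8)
The plan is to analyze the canonical map $\cO_Z \to \bR\pi_*(\cO_E)$ over a suitable open dense subset of $Z$, using that $\cO$-acyclicity at $x$ propagates to general closed points of $Z$ (Lemma \ref{lem:closed-points}) and combining this with generic flatness and cohomology-and-base-change. First I would replace $\pi$ by the restricted morphism $\pi_Z \colon E \to Z$ (scheme-theoretically, $E = \pi^{-1}(Z)_{\mathrm{red}}$ with the reduced structure), so that the statement becomes: $\Cone(\cO_Z \to \bR\pi_{Z,*}(\cO_E))$ is supported on a proper closed subset of $Z$. Since being an isomorphism in $\Db(Z)$ is a condition that can be checked on an open cover, and the cone is automatically supported on the locus where $\cO_Z \to \bR\pi_{Z,*}(\cO_E)$ fails to be a quasi-isomorphism, it suffices to produce a dense open $Z^\circ \subset Z$ over which $\bR\pi_{Z,*}(\cO_E)|_{Z^\circ} \simeq \cO_{Z^\circ}$.

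Next I would shrink $Z$ to a dense open subset $Z^\circ$ over which the morphism $\pi_Z \colon E \to Z$ is flat (generic flatness, \cite[Corollary 12.9]{Hartshorne} or EGA) and over which, moreover, each fiber $E_{z} = \pi_Z^{-1}(z) = \pi^{-1}(z)_{\mathrm{red}}$ for closed $z \in Z^\circ$ has $\rH^{>0}(E_z, \cO) = 0$ and $\rH^0(E_z, \cO) = \kk(z)$ — this is exactly $\cO$-acyclicity at general closed points of $Z$, which holds by Lemma \ref{lem:closed-points} since $\pi$ is $\cO$-acyclic at $x$. (We may also need the fibers to be \emph{geometrically} connected and geometrically reduced on a dense open, which is again a generic property.) On this open set, flat base change together with the semicontinuity/base-change theorem (cohomology and base change, \cite[Theorem 12.11]{Hartshorne}) shows that $\rR^{i}\pi_{Z,*}(\cO_E)|_{Z^\circ}$ vanishes for $i > 0$ and that $\rR^{0}\pi_{Z,*}(\cO_E)|_{Z^\circ}$ is a coherent sheaf whose fiber at each closed point is $1$-dimensional; combined with the fact that the natural map $\cO_{Z^\circ} \to \pi_{Z,*}(\cO_E)|_{Z^\circ}$ is injective (as $Z^\circ$ is reduced and $\pi_Z$ is dominant with reduced fibers) and that source and target have the same fiber dimension everywhere, it is an isomorphism after possibly shrinking once more. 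Hence $\bR\pi_{Z,*}(\cO_E)|_{Z^\circ}$ is concentrated in degree zero and equals $\cO_{Z^\circ}$, so $\cF_Z^\bullet|_{Z^\circ} = 0$ and $\cF_Z^\bullet$ is supported on the proper closed subset $Z \setminus Z^\circ$.

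The main obstacle is making sure the map one is computing the cone of is the \emph{correct} canonical map — i.e. that $\cO_Z \to \bR\pi_{Z,*}(\cO_E)$ restricts on $Z^\circ$ to the adjunction unit and that this unit is indeed the identity after the base-change identifications. This is where one must be careful: $\pi$ is not assumed birational or even flat globally, so all of the base-change isomorphisms are only claimed after passing to $Z^\circ$, and one needs the fibers there to be not merely $\cO$-acyclic but geometrically connected and reduced so that $\rR^0\pi_{Z,*}(\cO_E)$ has the expected rank-one behavior and the unit map is an isomorphism rather than merely an isomorphism up to a nowhere-vanishing twist. Once these generic properties are collected the argument is routine, and none of it requires knowing anything about $X \setminus Z$ or about the higher-dimensional structure of $\pi$.
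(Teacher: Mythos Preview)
Your proposal is correct, and the overall shape---restrict to $\pi_Z\colon E \to Z$ and show the map $\cO_Z \to \bR\pi_{Z,*}(\cO_E)$ is an isomorphism over a dense open---matches the paper. The difference is in how you establish that last point. You spread $\cO$-acyclicity from $x$ to general closed points via Lemma~\ref{lem:closed-points}, then invoke generic flatness and cohomology-and-base-change over closed points of a dense open $Z^\circ$; this forces you to also arrange that the fibers of $\pi_Z$ over $Z^\circ$ are geometrically reduced so that $\pi_Z^{-1}(z) = \pi^{-1}(z)_{\mathrm{red}}$. The paper instead works directly at the single generic point $x$: the inclusion $\Spec \kk(x) \hookrightarrow Z$ is flat (localization), so flat base change computes the stalk of $\bR\pi_{Z,*}(\cO_E)$ at $x$ as $\rH^*(E_x, \cO)$, and the hypothesis of $\cO$-acyclicity at $x$ together with \eqref{eq:fibers} says this is $\kk(x)[0]$. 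Hence $\cF_Z^\bullet$ has zero stalk at the generic point, so its (closed) support misses $x$ and is a proper closed subset of $Z$. Your route buys nothing extra here; the paper's is shorter because checking vanishing at the generic point is equivalent to vanishing on a dense open, and base change at the generic point needs no flatness arrangement.
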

\begin{proof}
Replacing $\pi$ by 
the restricted 
morphism
$\pi_Z\colon E \to Z$
does not affect the definition of $\cF_Z^\bullet$.
The fiber of $\cF_Z^\bullet$ at the generic point $x \in Z$ is isomorphic to
\begin{equation}
\label{eq:cFZ-generic}
\Cone(\kk(x)[0] \to \rH^*(E_x, \cO)),
\end{equation}
where $E_x$ is the generic fiber of $\pi_Z$.
By assumption $\pi_Z$
is $\cO$-acyclic
at the generic point $x \in Z$,
hence \eqref{eq:fibers} implies
that \eqref{eq:cFZ-generic} vanishes. In particular, 
$\cF_Z^\bullet$ vanishes
at the generic point $x \in Z$. Therefore $\cF_Z^\bullet$ is supported
on a proper closed subset of $Z$.
\end{proof}

\begin{lemma}\label{lem:devissage}
Let $\cD^m$ be the triangulated
subcategory of $\Db(X)$ consisting
of complexes acyclic away from a codimension $m$ subset. Then
$\cD^m$ is generated by
$\cD^{m+1}$
and all $\cO_Z$ for $Z \subset X$
with $Z$ integral, $\codim(Z) = m$.
In particular, $\Db(X)$
is generated by all structure
sheaves $\cO_Z$.
\end{lemma}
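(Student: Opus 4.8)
Write $\cT$ for the triangulated subcategory of $\Db(X)$ generated by $\cD^{m+1}$ together with the sheaves $\cO_Z$ for $Z\subset X$ integral with $\codim Z=m$. The inclusion $\cT\subseteq\cD^m$ is immediate: $\cD^{m+1}\subseteq\cD^m$, each such $\cO_Z$ lies in $\cD^m$, and $\cD^m$ is triangulated. So the point is the reverse inclusion $\cD^m\subseteq\cT$. Given $\cF^\bullet\in\cD^m$, the truncation triangles $\tau_{\le b-1}\cF^\bullet\to\cF^\bullet\to\cH^b(\cF^\bullet)[-b]\to$ and induction on the cohomological amplitude reduce this to the case $\cF^\bullet=\cF[i]$ for a single coherent sheaf $\cF$; and since $\cF^\bullet$ is acyclic off some closed $W$ with $\codim(W,X)\ge m$, each of its cohomology sheaves is supported on $W$, so we may assume $\cF$ is coherent with $W':=\supp\cF$ of codimension $\ge m$.

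The plan is then to apply the classical d\'evissage for coherent sheaves on a Noetherian scheme: $\cF$ has a finite filtration $0=\cF_0\subset\cF_1\subset\dots\subset\cF_n=\cF$ by coherent subsheaves whose graded pieces are of the form $(j_{Z_i})_*\cG_i$, where $j_{Z_i}\colon Z_i\hookrightarrow X$ is the inclusion of an integral closed subscheme $Z_i\subseteq W'$ and $\cG_i$ is a torsion-free rank-one coherent sheaf on $Z_i$. The filtration gives a sequence of triangles in $\Db(X)$, so it suffices to prove each $(j_{Z_i})_*\cG_i$ lies in $\cT$; note $\codim(Z_i,X)\ge\codim(W',X)\ge m$.

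If $\codim Z_i\ge m+1$ then $(j_{Z_i})_*\cG_i\in\cD^{m+1}\subseteq\cT$. If $\codim Z_i=m$, then, as $Z_i$ is integral and quasi-projective, $\cG_i$ is isomorphic to a nonzero coherent ideal sheaf $\cI\subseteq\cO_{Z_i}$; pushing $0\to\cI\to\cO_{Z_i}\to\cO_{Z_i}/\cI\to0$ forward along the closed immersion $j_{Z_i}$ gives a triangle whose third term is supported on $V(\cI)\subsetneq Z_i$, hence on a closed subset of codimension $\ge m+1$ in $X$, and so belongs to $\cD^{m+1}$. Therefore $(j_{Z_i})_*\cG_i\cong(j_{Z_i})_*\cI$ lies in the subcategory generated by $\cD^{m+1}$ and $\cO_{Z_i}$, hence in $\cT$. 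This gives $\cD^m=\cT$. For the final assertion, observe that for $m>\dim X$ the only closed subset of codimension $\ge m$ is empty, so $\cD^m=0$; since $\cD^0=\Db(X)$, descending induction on $m$ using what was just proved shows $\cD^m=\langle\,\cO_Z : Z\text{ integral},\ \codim Z\ge m\,\rangle$, and the case $m=0$ is precisely the statement that $\Db(X)$ is generated by all structure sheaves $\cO_Z$.

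The only step that is not purely formal is the d\'evissage; everything else is bookkeeping with supports and codimensions, so I expect no serious obstacle. If a self-contained argument is preferred, that step can be replaced by Noetherian induction on $W'=\supp\cF$: pick a codimension-$m$ component $Z$ of $W'$ with generic point $\eta$ (so $\cF_\eta$ has finite length over $\cO_{X,\eta}$, being supported only at $\eta$), choose $N$ with $\cI_{Z,\eta}^N\cF_\eta=0$, and use the filtration $\cF\supseteq\cI_Z\cF\supseteq\cI_Z^2\cF\supseteq\dots\supseteq\cI_Z^N\cF$; its bottom term is supported on a proper closed subset of $W'$ (it misses $\eta$), and its successive quotients are coherent sheaves on $Z$, which one reduces to $\cO_Z$ by a secondary induction on the generic rank, twisting by an ample line bundle to produce enough sections exactly as above. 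The only thing to be careful about in this variant is to check that each auxiliary sheaf introduced really lies in $\cD^{m+1}$ or has strictly smaller support than $W'$.
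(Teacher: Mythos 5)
Your argument is, in substance, the same standard Grothendieck d\'evissage (``topological filtration'') that the paper uses: reduce to a single coherent sheaf supported in codimension $\ge m$, filter it so that the graded pieces are pushforwards from integral subvarieties, and kill the codimension-$m$ pieces down to $\cO_Z$ modulo $\cD^{m+1}$ using triangles coming from short exact sequences of sheaves; your ``self-contained variant'' at the end is essentially verbatim the paper's proof ($\cI_Z$-adic filtration, induction on components, ample twists $\cO_Z^{\oplus n}(-H)\to\cF_0$ and $\cO_Z(-H)\to\cO_Z$ with cone $\cO_{Z\cap H}$). The structure of the induction, the reduction via truncation triangles, and the final descending induction giving generation of $\Db(X)$ by all $\cO_Z$ are all fine, and you correctly use only cones and shifts, which is what the statement requires.

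One claim in your main route is false as literally stated: a torsion-free rank-one coherent sheaf on an integral quasi-projective variety need \emph{not} be isomorphic to an ideal sheaf $\cI\subseteq\cO_{Z_i}$ --- for instance $\cO_{\P^1}(1)$ admits no nonzero map to $\cO_{\P^1}$ at all, so it embeds in no ideal sheaf. This does not break the proof, for two reasons: first, the classical filtration lemma you invoke is usually stated (e.g.\ in the Stacks Project, Cohomology of Schemes, the d\'evissage lemma) with graded pieces already of the form $(j_{Z_i})_*\cI_i$ for a nonzero ideal sheaf $\cI_i\subseteq\cO_{Z_i}$, so the offending step is unnecessary if you cite that version; second, even with a general torsion-free rank-one $\cG_i$ one fixes it by twisting down by a sufficiently ample divisor $H$ --- a section of $\cHom(\cG_i,\cO_{Z_i})(nH)$ nonzero at the generic point gives an injection $\cG_i(-nH)\hookrightarrow\cO_{Z_i}$, and the cone of $\cG_i(-nH)\to\cG_i$ is supported on $Z_i\cap H$, hence lies in $\cD^{m+1}$ --- which is exactly the ample-twist manoeuvre in the paper's proof. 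With either repair your argument is complete and coincides with the paper's up to packaging (you black-box the filtration lemma where the paper builds it by hand).
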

\begin{proof}
This is the standard topological filtration argument in algebraic K-theory going back to Grothendieck,
see e.g.\ \cite[Lemma 1.7]{Calabrese-Pirisi}.
We recall the argument.
Filtering complexes
by their cohomology sheaves, it suffices
to consider a coherent sheaf $\cF$ set-theoretically
supported
on a reduced
subscheme $Z \subset X$ of codimension $m$.
Each subquotient
$\cI_Z^n \cF / \cI_Z^{n+1} \cF$
is a $\cO_X / \cI_Z$--module,
hence it is
scheme-theoretically supported on $Z$
so we can assume $\cF = i_*(\cF_0)$,
where $i\colon Z \to X$ is the inclusion,
for a coherent sheaf $\cF_0$ on $Z$.
A simple argument using induction on the number
of irreducible components of $Z$ allows
us to assume that $Z$ is integral.
In this case since we assume $X$ to be 
quasiprojective, there is a sufficiently
ample line bundle $\cO(H)$
and a morphism
\[
\phi\colon \cO_Z^{\oplus n}(-H) \to \cF_0
\]
which is an isomorphism over generic point of $Z$.
In other words, $\Cone(\phi) \in \cD^{m+1}$
and we can assume $\cF_0$ is a line bundle $\cO_Z(-H)$
on $Z$.
The same argument using
\[
\Cone(\cO_Z(-H) \to \cO_Z) \simeq \cO_{Z \cap H} \in \cD^{m+1}
\]
reduces
the statement to the case
$\cF_0 = \cO_Z$ and we are done.
\end{proof}

\begin{proposition}\label{prop:acylic-surjective}
If $\pi$ is $\cO$-acyclic after a modification, then the image
$\bR\pi_*(\Db(\tX))$ generates
$\Db(X)$ as a triangulated category.
\end{proposition}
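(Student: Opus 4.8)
The plan is to first reduce to the case where $\pi$ itself is $\cO$-acyclic, and then run a descending induction on codimension, using the dévissage of Lemma~\ref{lem:devissage} to reduce to structure sheaves of integral subvarieties and Lemma~\ref{lem:FZ} to produce each such structure sheaf from the image of $\bR\pi_*$ up to a lower-dimensional error term. To remove the modification: by hypothesis there is a proper morphism $\sigma\colon \tX' \to \tX$ with $\pi\sigma$ $\cO$-acyclic, and since proper pushforward preserves bounded coherent complexes we have
\[
\bR(\pi\sigma)_*(\Db(\tX')) = \bR\pi_*\bigl(\bR\sigma_*(\Db(\tX'))\bigr) \subseteq \bR\pi_*(\Db(\tX)),
\]
so if the image of $\bR(\pi\sigma)_*$ generates $\Db(X)$ then so does the image of $\bR\pi_*$. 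Hence from now on I would assume $\pi$ is $\cO$-acyclic, and write $\cT \subseteq \Db(X)$ for the triangulated subcategory generated by the image of $\bR\pi_*$; the goal is $\cT = \Db(X)$.

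Next I would invoke Lemma~\ref{lem:devissage}. Since $\cD^0 = \Db(X)$, it is enough to show $\cD^m \subseteq \cT$ for every $m \ge 0$, and I would prove this by descending induction on $m$. For $m > \dim X$ one has $\cD^m = 0$, so there is nothing to check. For the inductive step, assume $\cD^{m+1} \subseteq \cT$; by Lemma~\ref{lem:devissage} the category $\cD^m$ is generated, as a triangulated category, by $\cD^{m+1}$ together with the sheaves $\cO_Z$ for $Z \subseteq X$ integral of codimension $m$, so it suffices to show $\cO_Z \in \cT$ for each such $Z$.

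Fix such a $Z$, let $x$ be its generic point, and let $E \subseteq \tX$ be the reduced $\pi$-preimage of $Z$. Then $\cO_E$, regarded as a coherent sheaf on $\tX$, lies in $\Db(\tX)$, so $\bR\pi_*(\cO_E) \in \cT$. Since $\pi$ is $\cO$-acyclic it is $\cO$-acyclic at $x$, so Lemma~\ref{lem:FZ} applies and the complex $\cF_Z^\bullet = \Cone\bigl(\cO_Z \to \bR\pi_*(\cO_E)\bigr)$ is supported on a proper closed subset $W \subsetneq Z$. As $W$ has codimension at least $m+1$ in $X$, this gives $\cF_Z^\bullet \in \cD^{m+1} \subseteq \cT$ by the inductive hypothesis. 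Finally, the defining triangle $\cO_Z \to \bR\pi_*(\cO_E) \to \cF_Z^\bullet \to \cO_Z[1]$ exhibits $\cO_Z$ as a shift of the cone of a morphism $\bR\pi_*(\cO_E) \to \cF_Z^\bullet$ between two objects of $\cT$, hence $\cO_Z \in \cT$. This closes the induction and shows $\Db(X) = \cD^0 \subseteq \cT$.

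Given the earlier results, the main obstacle here is not technical but conceptual, and it is exactly the point emphasized in the introduction: generation \emph{as a triangulated category} does not permit passing to direct summands, so it is not enough to know that $\cO_Z$ is a summand of an object in the image of $\bR\pi_*$ (which follows from Kawamata's work); one needs an honest distinguished triangle yielding $\cO_Z$. Lemma~\ref{lem:FZ} provides precisely the error term $\cF_Z^\bullet$ making this possible, and the codimension induction is arranged so that this error term always lies in the part of $\Db(X)$ already handled; the genuinely hard input — that $\cF_Z^\bullet$ is supported in strictly smaller dimension — is the $\cO$-acyclicity, ultimately Lemma~\ref{lem:Steenbrink-NEW}.
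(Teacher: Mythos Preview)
Your proof is correct and follows essentially the same route as the paper's: reduce to the $\cO$-acyclic case via the modification $\sigma$, then run descending induction on codimension using Lemma~\ref{lem:devissage} to reduce to structure sheaves $\cO_Z$ and Lemma~\ref{lem:FZ} to absorb the lower-dimensional error term $\cF_Z^\bullet$ into $\cT$. Your write-up is in fact slightly more explicit than the paper's in spelling out the final triangle argument showing $\cO_Z \in \cT$.
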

\begin{proof}
By definition there exists a 
birational morphism
$\sigma\colon \tX' \to \tX$
of smooth varieties such that
$\pi\sigma$ is $\cO$-acyclic.
Since
$\bR\pi_*(\Db(\tX))$
contains
$\bR(\pi\sigma)_*(\Db(\tX))$
it suffices to show 
that
$\bR(\pi\sigma)_*(\Db(\tX))$
generates
$\Db(X)$ as a triangulated category.

Let $\cT$ be the triangulated
subcategory of $\Db(X)$
generated by 
$\bR(\pi\sigma)_*(\Db(\tX))$.
We need to show
that $\cT = \Db(X)$.
Let $\cD^m$ be the subcategory of $\Db(X)$
consisting of complexes acyclic away from a codimension $m$ subset.
We check by the descending
induction on $m$ that $\cD^m \subset \cT$ for all 
$0 \le m \le \dim(X) + 1$;
this proves the result because $\cD^0 = \Db(X)$.
For the induction base 
$\cD^{\dim(X)+1}$ 
consists of the zero-complex, so the statement holds.
Assume $\cD^{m+1} \subset \cT$, 
for some $m \ge 0$.
To show that $\cD^{m} \subset \cT$,
by Lemma \ref{lem:devissage},
it suffices to check that 
for all
integral subschemes $Z \subset X$
of codimension $m$ we have $\cO_Z \in \cT$.
Let $x \in Z$ be the generic point.
By Lemma \ref{lem:FZ},
applied to $\pi \sigma$,
$\cF_Z^\bullet$
is a complex supported on 
a proper closed subset of $Z$ and the statement 
follows by the induction hypothesis.
\end{proof}

\begin{proof}[Proof of Theorem \ref{thm:main-intro}]
By Proposition \ref{prop:rational-acyclic},
$\pi$ is $\cO$-acyclic
after a modification, hence
by Proposition \ref{prop:acylic-surjective}, 
the image of $\bR\pi_*$ generates $\Db(X)$.
\end{proof}

\begin{proof}[Proof of Corollary \ref{cor:cat-contraction}]
Let $\cT \subseteq \Db(X)$
be the image of $\ol{\bR\pi_*}$. 
Then $\cT$ is closed under shifts,
and since $\ol{\bR\pi_*}$
is assumed to be full,
$\cT$ is also closed under taking cones
of morphisms. 
Thus $\cT$ is a
triangulated subcategory
of $\Db(X)$.
On the other hand, by
Theorem \ref{thm:main-intro},
$\cT$ generates $\Db(X)$
as a triangulated category.
Since $\cT$ is already 
closed under taking 
shifts and cones, we obtain $\cT = \Db(X)$.
\end{proof}

\section{Examples and counterexamples}

Both Bondal--Orlov Conjecture
\ref{conj:BO} and Theorem \ref{thm:main-intro}
fail when $X$ does not have rational singularities.

\begin{example}\label{ex:non-rat}
If $\kk$ is a non-closed field, 
$C \subset \P^2$ 
is a smooth 
cubic curve without rational points, $X \subset \bA^3$ is a cone over $C$, and $\tX$ is the blow-up of the vertex of the cone $X$. The singularity is not rational and 
$\bR\pi_*\colon \rG_0(\tX) \to \rG_0(X)$
is not surjective because the image
does not contain $[\cO_P]$, essentially because
the Euler characteristic of every object $\cE \in \Db(C)$ is divisible
by $3$, hence never equals one.
\end{example}

In Theorem \ref{thm:main-intro},
we cannot replace the Grothendieck group $G_0(X)$ with either integral Chow groups or Borel--Moore homology.
We explain this in detail.

For Chow groups with rational coefficients,
the induced
map $\pi_* \colon \CH_*(\tilde{X}, \QQ) \to \CH_*(X, \QQ)$ is always surjective
for a proper morphism $\pi\colon \tX \to X$
since any closed subset $Z \subseteq X$ is dominated by a closed subset $\tilde{Z} \subseteq \tilde{X}$ such that the restriction $\pi|_{\tilde{Z}} \colon \tilde{Z} \to Z$ is generically finite. However, integrally this is not always the case.

\begin{example}\label{rem:homology1}
If $\kk$ is a non-closed field, $X \subset \bA^3$ is a cone over a conic $C$ without rational points, and $\tX$ is the blow-up of the vertex of the cone $X$,
then $\pi_*\colon \CH_0(\tX)=\CH_0(C) \to \CH_0(X)=\ZZ$
is not surjective, since the pushforward of any 0-cycle in $C$ has even degree. 
\end{example}

Examples
analogous to Examples
\ref{ex:non-rat}
and
\ref{rem:homology1}
can be constructed
over $\kk = \CC$
by spreading out.
We now consider pushforwards
for Borel--Moore homology for complex varieties,
which for proper varieties
coincides with the usual homology.

\begin{example}\label{rem:homology2}
Let $X$ be a projective complex
threefold with only isolated nodal (thus rational) singularities, and $\pi\colon \tilde{X} \to X$  be the blow-up of the nodes $\Sigma$ of $X$ with exceptional divisor $E_p \simeq \bP^1 \times \bP^1$ for $p \in \Sigma$.
The pushforward $\rH_3(\tX, \QQ) \to \rH_3(X, \QQ)$ may not be surjective.

To explain this, we need to introduce some notation.
Let $\delta(X) := \rank(\Cl(X)/\Pic(X))$ be the
\emph{defect} of $X$. We have $0 \le \delta(X) \le |\Sigma|$, see e.g.\ \cite[Corollary 3.8]{Kalck-Pavic-Shinder}.
Furthermore, if $X$ is a nodal hypersurface
in $\P^4$ of degree $d \ge 3$, or a nodal double cover of $\P^3$ branched in a surface of degree $d \ge 4$,
then $\delta(X) < |\Sigma|$, see \cite[Example  3.13]{Kalck-Pavic-Shinder}.
The defect $\delta(X)$ can be computed from the resolution $\tX$
via 
\[
|\Sigma| - \delta(X) = 
\rank \Coker\left(\Pic(\tX) \to  \bigoplus_{p \in \Sigma} \Pic(E_p)\right) 
=
\rank \Coker\left(H^2(\tX) \to \bigoplus_{p \in \Sigma} \rH^2(E_p)\right),
\]
where cohomology are taken with integral or rational coefficients. 
Using duality we can also write
\[
|\Sigma| - \delta(X) = 
\rank \Ker\left(\bigoplus_{p \in \Sigma} \rH_2(E_p) \to \rH_2(\tX)\right).
\]
The Mayer--Vietoris exact sequence for the mapping cylinder of $\pi$ reads
\[ \rH_3(\tilde{X}) \xrightarrow{\pi_*} \rH_3(X) \to \bigoplus_{p \in \Sigma} \rH_2(E_p) 
\to \rH_2(\tilde{X}),\]
thus $\pi_* \colon \rH_3(\tilde{X}) \to \rH_3(X)$ is not surjective provided $\delta(X) < |\Sigma|$.
\end{example}

We may restrict to
the pure part of the homology $\rH^{\mathrm{BM}}_{\mathrm{pure}, i}(X, \QQ)$, i.e.\ the lower piece of the weight filtration of Deligne's mixed Hodge structure on $\rH^{\mathrm{BM}}_{i}(X, \QQ)$. If $\pi \colon \tilde{X} \to X$ is a resolution of singularities of a proper normal complex
variety $X$, 
then $\pi_* \colon \rH_{i}(\tilde{X}, \QQ) \to \rH_{\mathrm{pure}, i}(X, \QQ)$ is surjective, independently of the singularities of $X$; 
see \cite[Theorem 5.41]{PS08}.
\vspace{0.5 cm}\\

\corr{
\subsection*{Conflicts of interest.} none.

\subsection*{Financial support} M.M. was supported by the Institute of Science and Technology Austria. This project has received funding from the European Union’s Horizon 2020 research and innovation
programme under the Marie Skłodowska-Curie grant agreement No 101034413.

E.S. was partially supported by the EPSRC grant EP/T019379/1 ``Derived categories and algebraic K-theory of singularities'', and by the ERC Synergy grant ``Modern Aspects of Geometry: Categories, Cycles and Cohomology of Hyperkähler Varieties".}

\bibliography{fano}
\bibliographystyle{alpha}

\end{document}